\pgfplotsset{width=10.5cm,compat=1.9}
\author{Egor Kravchenko}
\newcommand\one{\mathds 1}
\newcommand\N{\mathbb N}
\newcommand\R{\mathbb R}
\let\le\leqslant
\let\ge\geqslant
\def \P {\mathbb P}
\def \F {\mathcal F}
\newcommand \E {\mathbb E}
\newcommand \feasible {\mathfrak F}
\def \ve {\varepsilon}
\definecolor{ForestGreen}{rgb}{.13,.54,.13}
\definecolor{BrickRed}{rgb}{.80,.26,.33}
\newtheorem{theorem}{Theorem}[section]
\newtheorem{lemma}[theorem]{Lemma}
\newtheorem*{lemma*}{Lemma}
\newtheorem{proposition}[theorem]{Proposition}
\newtheorem{problem}[theorem]{Problem}
\newtheorem{corollary}[theorem]{Corollary} 	
\newtheorem*{problem*}{Задача}
\newtheoremstyle{Lemma_repeat}
        {\topsep}{\topsep}              
        {\itshape}                      
        {}                              
        {\bfseries}                     
        {.}                             
        { }                             
        {\thmname{#1}\thmnote{ \bfseries #3}}
    \theoremstyle{Lemma_repeat}
\newtheorem{Theorem_2}{Theorem}
\title{Coherent distributions: Hilbert space approach and duality
}
\begin{document}
\maketitle

\begin{abstract}

Let $X$ be a Bernoulli random variable with the success probability $p$. We are interested in tight bounds on 
$\mathbb{E}[f(X_1,X_2)]$, where  $X_i=\mathbb{E}[X| \mathcal{F}_i]$ and $\mathcal{F}_i$ are some sigma-algebras. This problem is closely related to understanding extreme points of the set of coherent distributions. A distribution on $[0,1]^2$ is called \emph{coherent} if it can be obtained as the joint distribution of $(X_1, X_2)$ for some choice of $\mathcal{F}_i$.
By treating random variables as vectors in a Hilbert space, we establish an upper bound for quadratic~$f$, characterize $f$ for which this bound is tight, and show that such $f$ result in exposed coherent distributions with arbitrarily large support. As a corollary, we get a tight bound on $\mathrm{cov}\,(X_1,X_2)$ for~$p\in [1/3,\,2/3]$.
To obtain a tight bound on $\mathrm{cov}\,(X_1,X_2)$ for all $p$, we develop an approach based on linear programming duality. Its generality is illustrated by tight bounds on $\mathbb{E}[|X_1-X_2|^\alpha]$ for any~$\alpha>0$ and $p=1/2$.
     
%
    
\end{abstract}

    \section{Introduction}

    A distribution $\mu$ on the unit square $[0, 1]^2$ 
    is called coherent if there exists a probability space~$(\Omega, \mathcal{F}, \mathbb{P})$, a Bernoulli random variable $X$ defined on this space, and two subalgebras~$\mathcal{F}_1, \mathcal{F}_2 \subset \mathcal{F}$ such that $\mu$ is a joint distribution of a random vector $(X_{1}, X_{2})$, \\ where~${X_{i} = \mathbb{E}[X|\mathcal{F}_i]}$. We denote the class of all these distributions by $\mathcal{C}$ and its subset corresponding to fixed expectation~$\mathbb{E}[X]=p$ by $\mathcal{C}_p$. 

    The set of coherent distributions $\mathcal{C}$ admits the following interpretation. Two experts make predictions about a random variable $X$, which they do not observe directly. The information available to expert $i$ is captured by a sigma-algebra $\mathcal{F}_i$. Accordingly, $X_{i}$ is expert $i$'s best prediction for $X$ given their information. Thus, the set $\mathcal{C}$ is the set of all possible distributions of experts' predictions, and the set $\mathcal{C}_p$ corresponds to experts attributing a prior probability $p$ to $\{X=1\}$.

    The key question about coherent distributions --- attracted substantial attention in probability and economic literature --- is how different experts' predictions can be.
    This question boils down to understanding $\mu\in \mathcal{C}_p$ maximizing $\int f(x_1,x_2)d\mu$, where $f$ is a certain measure of discrepancy. This problem turns out to be challenging, and the optimal $\mu$ is known for only a few particular objectives~$f$, e.g., 
     $f=(x_1-x_2)^2$ was analyzed by \citep*{cichomski2020maximal,arieli2020feasible} and $f=-(x_1-1/2)(x_2-1/2)$ corresponding to covariance minimization, by \citep*{arieli2020feasible} for~$p=1/2$.
In all the cases where the optimal $\mu$ is known, it has a simple form and contains at most~$4$ points in its support. 

Geometrically, maximization of $\int f(x_1,x_2)d\mu$ over  $\mathcal{C}_p$ corresponds to maximizing a linear functional over a convex set \citep{Dubins1980}. By Bauer's principle, the maximum is attained at an extreme point. Together with the simplicity of known solutions, this led to a conjecture that extreme points of $\mathcal{C}_p$ are atomic distributions with at most~$4$ points in the support \citep{burdzy2019bounds}.
This conjecture was refuted by \cite*{arieli2020feasible} and \cite{Zhu}, who constructed extreme $\mu$ with an arbitrarily large finite support; \cite{cichomski2023existence} demonstrated the existence of non-atomic extreme points.
Despite this progress in understanding extreme points of $\mathcal{C}_p$, no examples of objectives leading to non-elementary optima $\mu$ are known; in particular, it is not known whether the constructed extreme points are exposed.

The existence of exposed points with non-atomic support remains an open question. While our first theorem demonstrates that exposed $\mu$ with arbitrary large support originate as optima for seemingly innocent quadratic objectives~$f$. The optimal $\mu$ belongs to a family of ladder distributions depicted in Figure~\ref{Example of a ladder distribution}. We call $\mu$ a ladder distribution if its support can be represented as a sequence of points $((x_i, y_i))_{i=1}^n$, such that  

\begin{itemize}
    \item all points $(x_i, y_i)$ are distinct;
    \item sequences $(x_i)_{i=1}^n$ and $(y_i)_{i=1}^n$ are monotone (weakly increasing or decreasing);
    \item we have $x_i = x_{i+1}$ or $y_i = y_{i+1}$ for all $i \in \{1, \dots, n-1\}$;
    \item there are at most two points on any horizontal or vertical line.
\end{itemize}

    \begin{figure}[ht]
                \centering
                \begin{tikzpicture}[scale = 0.7]
                  \begin{axis}[
                    xlabel={$X_{1}$},
                    ylabel={$X_{2}$},
                    xmin=0,
                    xmax=1,
                    ymin=0,
                    ymax=1,
                    xtick={0, 0.2, 0.4, 0.6, 0.8, 1},
                    xmajorgrids=true,
                    ytick={0, 0.2, 0.4, 0.6, 0.8, 1},
                    ymajorgrids=true,
                    legend pos=south west,
                  ]
                  \addplot[only marks, color=black, ,mark=*,mark options={scale=2, fill=blue}] coordinates {(0.2, 1) (0.2, 0.8) (0.4, 0.8) (0.4, 0.6) (0.6, 0.6) (0.6, 0.4) (0.8, 0.4) (0.8, 0.2) (1, 0.2)};
                  \end{axis}
                \end{tikzpicture}%
           
            \caption{The support of a ladder distribution.}
            \label{Example of a ladder distribution}
    \end{figure}
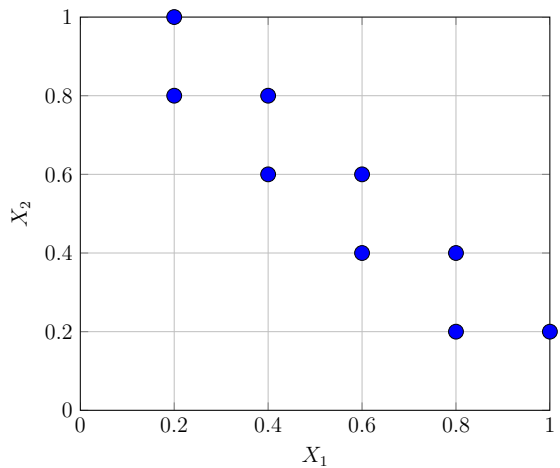

   \begin{theorem}
    \label{th1.1}
    Let 
    $$f(x_1,x_2)=\alpha(x_1-x_2)^2 + \beta(x_1+x_2-2p)^2.$$

    If  $\alpha < 0$ or $2\beta > \alpha$, then the following identity holds:

     $$\max_{\mu\in \mathcal{C}_p} \int f(x_1,x_2) d\mu = \max\left(0, 4\beta, \alpha + \beta \right)\cdot p(1-p).$$

    Otherwise, if $\alpha \ge \max(0, 2\beta)$, then the following bound holds:
    $$\max_{\mu\in \mathcal{C}_p} \int f(x_1,x_2) d\mu \leq p(1-p) \frac{\alpha^2}{\alpha - \beta}.$$

    
    Moreover, in this case, the equality is achieved if and only if there exists $N\in \mathbb{N}$ such that at least one of the following three conditions is satisfied:
    
    \begin{enumerate}
        \item  \label{case2}

         $ \frac {\alpha - \beta}{\alpha} = N$ and $ (2 - \frac{\alpha}{\alpha - \beta})p + \frac{\alpha}{\alpha - \beta} \ge 1 \ge (2 - \frac{\alpha}{\alpha - \beta})p .$
    
        \item \label{case1} 
        $\frac{2(\alpha - \beta) - (\alpha - 2\beta)p}{\alpha} = N$ and $(2 - \frac{\alpha}{\alpha - \beta})p + \frac{\alpha}{\alpha - \beta}> 1 $.
        
    
         \item  \label{case3}

         $\frac{2(\alpha - \beta) - (\alpha - 2\beta)(1-p)}{\alpha} = N$ and $(2 - \frac{\alpha}{\alpha - \beta})p < 1$.
    \end{enumerate}
In the case of equality, the optimum is attained at a ladder distribution with $2N-1$ points in the support if condition~\ref{case2} does not hold, and at a convex combination of two such distributions if it does hold.

\end{theorem}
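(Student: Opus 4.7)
\emph{Plan.}

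\textbf{Hilbert-space upper bound.}
Centre the variables by setting $Y=X-p$ and $Y_i=X_i-p=\mathbb{E}[Y\mid\mathcal F_i]$, so that $\|Y\|_{L^2}^2=p(1-p)$, $\mathbb{E}[YY_i]=\mathbb{E}[Y_i^2]$, and
\[
\mathbb{E}[f] = (\alpha+\beta)\bigl(\mathbb{E}[Y_1^2]+\mathbb{E}[Y_2^2]\bigr) + 2(\beta-\alpha)\,\mathbb{E}[Y_1 Y_2].
\]
For any scalar $u$, the inequality $\|Y-u(Y_1+Y_2)\|_{L^2}^2\ge 0$ produces a linear relation among these three moments. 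Matching its coefficients to those of $\mathbb{E}[f]$ forces the choice $u=(\alpha-\beta)/\alpha$; the hypothesis $\alpha\ge\max(0,2\beta)$ guarantees that the resulting proportionality constant has the right sign, and yields exactly the bound $p(1-p)\,\alpha^2/(\alpha-\beta)$.

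\textbf{Equality forces support on two anti-diagonals.}
Equality in this chosen Hilbert inequality forces $Y=u(Y_1+Y_2)$ almost surely, equivalently $X=u(X_1+X_2)+p(1-2u)$. Since $X\in\{0,1\}$, the sum $S=X_1+X_2$ takes only two values $s_0<s_1$ with $s_1-s_0=1/u=\alpha/(\alpha-\beta)$, and $\{X=i\}=\{S=s_i\}$. Hence the support of any optimal $\mu$ lies on two parallel anti-diagonals inside $[0,1]^2$. Conversely, for any $\mu$ supported on two such lines the same recipe defines a Bernoulli $X$, and coherence with $\mathcal F_i=\sigma(X_i)$ reduces to the two identities $\mathbb{E}[X\mid X_i]=X_i$.

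\textbf{Emergence of the ladder and the three cases.}
Reading $\mathbb{E}[X\mid X_1=x]=x$ off the two-diagonal picture: a column $\{X_1=x\}$ meeting both diagonals must split its mass in ratio $x:(1-x)$; a column meeting only the upper (resp.\ lower) diagonal forces $x=1$ (resp.\ $x=0$). The symmetric condition for $X_2$ caps each row at two points, and the resulting alternation rule reproduces exactly the staircase pattern of Figure~\ref{Example of a ladder distribution}. Parametrising the ladder by the integer $N$ (the number of distinct $X_1$-values, giving $2N-1$ support atoms), using the slope $1/u$ between consecutive atoms, and imposing $\mathbb{E}[X_1]=p$ together with the constraint that endpoints lie in $[0,1]$ produces precisely the three number-theoretic conditions of the theorem. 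Case~\ref{case2} corresponds to a symmetric ladder whose two extreme atoms are strictly interior: the ladder can slide along the anti-diagonal direction, producing a one-parameter family of optima — whence convex combinations of two ladders appear. Cases~\ref{case1} and~\ref{case3} correspond to ladders clamped against the right boundary $X_1=1$ and the left boundary $X_1=0$ respectively, and the arithmetic expressions given are just $\mathbb{E}[X_1]=p$ written in terms of the clamped ladder's weights.

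\textbf{Degenerate regime and main obstacle.}
When $\alpha<0$ or $2\beta>\alpha$, the sign of $u$ flips and a different argument is needed; the three candidate values $0$, $4\beta$, $\alpha+\beta$ are $\mathbb{E}[f]/(p(1-p))$ at the simple coherent distributions $\delta_{(p,p)}$ (no information), $(1-p)\delta_{(0,0)}+p\delta_{(1,1)}$ (full information), and $(1-p)\delta_{(0,p)}+p\delta_{(1,p)}$ (one-sided information), and their maximality follows from the elementary bounds $\mathbb{E}[Y_i^2]\le p(1-p)$ and Cauchy--Schwarz on $\mathbb{E}[Y_1Y_2]$ combined with the unfavourable sign pattern that prevents the Hilbert trick from giving anything sharper. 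The principal technical obstacle throughout is the equality characterisation: translating the geometric condition ``$2N-1$ ladder atoms on two anti-diagonals inside $[0,1]^2$ with $\mathbb{E}[X_1]=p$'' into the three specific arithmetic expressions of the theorem, and pinpointing exactly when case~\ref{case2} is active and produces convex combinations of ladders as optima.
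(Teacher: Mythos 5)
Your sum-of-squares derivation of the upper bound is correct and is a genuinely cleaner route than the paper's: expanding $\|Y-u(Y_1+Y_2)\|^2\ge 0$ with the projection identities $\mathbb{E}[YY_i]=\mathbb{E}[Y_i^2]$ and matching coefficients at $u=(\alpha-\beta)/\alpha$ reproduces in a few lines what the paper obtains by reducing to a sphere in $\mathbb{R}^3$ and running Lagrange multipliers (Lemmas~\ref{reduce_to_R} and~\ref{sphereoptlemma}), and your equality condition $X_1+X_2-2p=\frac{\alpha}{\alpha-\beta}(X-p)$ is exactly the paper's Corollary~\ref{upper_bound_corollary}. However, there are two genuine gaps. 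First, in the degenerate regime your claim that the bound $4\beta\,p(1-p)$ follows from $\mathbb{E}[Y_i^2]\le p(1-p)$ and Cauchy--Schwarz on $\mathbb{E}[Y_1Y_2]$ fails when $0<\alpha/2<\beta<\alpha$ (e.g.\ $\alpha=3/2$, $\beta=1$, which satisfies $2\beta>\alpha$): there $\beta-\alpha<0$, so the relaxation $\mathbb{E}[Y_1Y_2]\ge-\sqrt{\mathbb{E}[Y_1^2]\,\mathbb{E}[Y_2^2]}$ together with $\mathbb{E}[Y_i^2]\le p(1-p)$ still permits the value $4\alpha\,p(1-p)>4\beta\,p(1-p)$. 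The geometric constraint that both $Y_i$ are projections of the same $Y$ is essential; the cheapest repair inside your own framework is $\alpha\|Y_1-Y_2\|^2+\beta\|Y_1+Y_2\|^2\le\beta\bigl(2\|Y_1-Y_2\|^2+\|Y_1+Y_2\|^2\bigr)\le 4\beta\|Y\|^2$, the last step being your own inequality specialised to $u=1/2$, i.e.\ the boundary case $\alpha=2\beta$.

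Second, the heart of the theorem --- the ``if and only if'' characterisation and the exact arithmetic form of the three conditions --- is only gestured at, and the mechanism you name is not the right one: $\mathbb{E}[X_1]=p$ holds automatically for every coherent distribution by the tower property and yields no integrality constraint. What actually produces the conditions is a propagation argument (the paper's Propositions~\ref{prop} and~\ref{induction_prop}): given $X_1+X_2=aX+(2-a)p$ with $a=\alpha/(\alpha-\beta)$, any set on which the law of $X_1$ restricted to $\{X=1\}$ vanishes forces vanishing on its translate by $a$, so the support is pushed onto a discrete chain with step $a$ that must terminate exactly on the boundary of $[0,1]^2$; the three conditions record the three ways the chain can close up, depending on how the two lines $X_1+X_2=(2-a)p$ and $X_1+X_2=a+(2-a)p$ sit relative to $X_1+X_2=1$. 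Your picture of condition~\ref{case2} is also slightly off: the ladder cannot slide continuously (a shifted chain would fail to terminate on the boundary); rather, when $1/a\in\mathbb{N}$ two distinct interleaved chains both close up, and the optima are precisely their convex combinations. These deferred steps are not a routine afterthought --- they constitute most of the paper's proof.
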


Theorem~\ref{th1.1} relies on a Hilbert-space reinterpretation of the problem introduced by \citep*{arieli2020feasible, cichomski2020maximal}. The idea is to interpret $X$ as an element of $L_2(\Omega, \F, \P)$ and $X_{i}$ as its orthogonal projections. A quadratic $f$ can be expressed through a scalar product, and thus the value of the optimization is bounded by a maximum of some functional expressed through scalar products of projections of $X$. Not all orthogonal projections of $X$ correspond to conditional expectations, so this bound may not be tight.  Theorem~\ref{th1.1} 
exhaust all quadratic $f$ for which this approach provides the exact optimum.



Theorem~\ref{th1.1} gives us a tight lower bound on $\mathrm{cov}(X_{1}, X_{2})$ but not for all $p$. Indeed, for~$f(x_1, x_2) = \frac 14 (x_1-x_2)^2 +\frac 14(x_1+x_2-2p)^2=-(x_1-p)(x_2-p)$, maximizing $\int f(x_1, x_2) d\mu$ corresponds to minimizing $\mathrm{cov}(X_{1}, X_{2})$. The condition~\ref{case2} from Theorem~\ref{th1.1} simplifies to~${\frac 23 \ge p \ge \frac 13}$, so the bound is tight only for such those $p$.


To find a tight lower bound on $\mathrm{cov}(X_{1}, X_{2})$ for $p$ outside of $[1/3,\,2/3]$ we develop another approach, based on interpreting the problem of maximizing $\E[f(X_{1}, X_{2}]$ as a linear program. 
By the duality theorem, the value of the primal problem is bounded by the value of the dual.
Hence, the optimality of a candidate solution to the primal problem can be verified by constructing a candidate solution of the dual so that the values of the two objectives coincide.
We show how to construct a primal-dual pair of solutions and obtain the following theorem.

    \begin{theorem}
        Let $f(x_1, x_2) = -(x_1-p)(x_2-p)$. Then
        \label{cov_tight_bound}
        \begin{equation*}
            \max_{\mu \in \mathcal{C}_p} \int f(x_1, x_2) d\mu = \min_{(X_{1}, X_{2}) \in \mathcal{C}_p}(\mathrm{cov}(X_{1}, X_{2})) =  
            \begin{cases}
                -\left(\frac{p(1-p)}{1+p}\right)^2 & p\in \left(0, \frac 13\right)\\
                -\frac{1}8p(1-p) & \text{$p\in \left[\frac13, \frac23\right]$}\\
                -\left(\frac{p(1-p)}{2 - p}\right)^2 & p \in \left(\frac 23, 1\right).
            \end{cases}
        \end{equation*}
        For notational convenience, we write $(X_{1}, X_{2})\in \mathcal{C}_p$, whenever we want to indicate, that the distribution of the random vector $(X_{1},X_{2})$ belongs to $\mathcal{C}_p$.
    \end{theorem}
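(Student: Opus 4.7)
The plan is to prove Theorem~\ref{cov_tight_bound} by exhibiting a matching primal-dual pair for an appropriate linear-programming formulation. The symmetry $(X, X_1, X_2) \mapsto (1-X, 1-X_1, 1-X_2)$ preserves coherence and covariance while mapping $p \mapsto 1 - p$, so it suffices to treat $p \le 1/2$. For $p \in [1/3, 1/2]$ I invoke Theorem~\ref{th1.1}: via the identity $-(x_1-p)(x_2-p) = \tfrac14(x_1-x_2)^2 - \tfrac14(x_1+x_2-2p)^2$, the problem fits the quadratic framework with $\alpha = 1/4$, $\beta = -1/4$, and condition~\ref{case2} (with $N = 2$) reduces to $p \in [1/3, 2/3]$, giving the bound $p(1-p)/8$. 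It remains to handle $p \in (0, 1/3)$.

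For the primal, set $a = 2p/(1+p)$ and take $\mu^\ast$ to be the three-point distribution assigning mass $p$ to $(a, a)$ and mass $(1-p)/2$ to each of $(a, 0)$ and $(0, a)$. Coherence is witnessed by declaring $\{X = 1\} = \{(X_1, X_2) = (a, a)\}$: a direct check confirms $\E[X \mid X_i] = X_i$ for $i = 1, 2$. A short computation then gives $\mathrm{cov}(X_1, X_2) = a^2 p - p^2 = -\bigl(p(1-p)/(1+p)\bigr)^2$, matching the desired bound.

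For the dual, I recast the maximization as a linear program over non-negative measures $\nu^+, \nu^-$ on $[0, 1]^2$ (the restrictions of the joint law of $(X_1, X_2)$ to $\{X = 1\}$ and $\{X = 0\}$), subject to the coherence marginal identities $(1 - x_i)\nu^+_i = x_i \nu^-_i$ for $i = 1, 2$, total-mass normalization, and $\nu^+([0,1]^2) = p$. Introducing Lagrange multipliers $\phi_1(x_1), \phi_2(x_2), c, \lambda$, the dual minimizes $c + \lambda p$ subject to
\begin{align*}
f(x_1, x_2) + (1 - x_1)\phi_1(x_1) + (1 - x_2)\phi_2(x_2) &\le c + \lambda, \\
f(x_1, x_2) - x_1 \phi_1(x_1) - x_2 \phi_2(x_2) &\le c,
\end{align*}
holding on $[0, 1]^2$. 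Complementary slackness at the support of $\mu^\ast$ (the first inequality tight at $(a, a)$, the second tight at $(a, 0)$ and $(0, a)$), combined with the natural symmetry ansatz $\phi_1 = \phi_2 = \phi$, produces a finite linear system determining $\phi(0), \phi(a), c, \lambda$; I extend $\phi$ to $[0, 1]$ as an affine or piecewise-linear function.

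The principal obstacle is global dual feasibility: the two inequalities must hold on all of $[0, 1]^2$, not merely at the support of $\mu^\ast$. This reduces to checking non-positivity of explicit (piecewise-)quadratic functions on the unit square, which I expect to handle by a one-variable concavity analysis after fixing the other coordinate. Once global feasibility is confirmed, strong LP duality equates the primal value $-(p(1-p)/(1+p))^2$ with the dual minimum $c + \lambda p$, settling $p \in (0, 1/3)$; the initial symmetry then transfers the conclusion to $p \in (2/3, 1)$.
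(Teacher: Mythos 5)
Your architecture coincides with the paper's: symmetry reduction to $p\le 1/2$, Theorem~\ref{th1.1} with $\alpha=1/4,\ \beta=-1/4$ for $p\in[1/3,2/3]$, the same three-point primal distribution supported on $(a,a),(a,0),(0,a)$ with $a=2p/(1+p)$, and an LP-duality certificate for $p\in(0,1/3)$. The gap is in the dual certificate itself. First, complementary slackness at the three support points pins down only $\phi(0)$, $\phi(a)$, $c$, $\lambda$ (and, after imposing tangency, $\phi'(a)$); it does not determine $\phi$ on the rest of $[0,1]$, and your proposal to ``extend $\phi$ as an affine or piecewise-linear function'' is not merely unverified --- it provably fails. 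Take the affine choice forced by tangency of the $\{X=0\}$ constraint along $y=0$ at $x=a$, namely $\phi(x)=2a^2-p-ax$ (in the paper's notation $g$). Then the left-hand side of the $\{X=1\}$ constraint,
\begin{equation*}
H(x,y)=-(x-p)(y-p)-\phi(x)(1-x)-\phi(y)(1-y),
\end{equation*}
has its unique interior critical point at $(a,a)$ with Hessian $\begin{pmatrix}-2a&-1\\-1&-2a\end{pmatrix}$, whose determinant $4a^2-1$ is negative because $a<1/2$ for $p<1/3$. So $(a,a)$ is a saddle, $H$ exceeds its value $\gamma$ along the direction $(1,-1)$, and dual feasibility is violated arbitrarily close to the point where slackness demands equality. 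The same obstruction rules out any $\phi$ that is affine near $a$. A strictly convex piece is needed there: the paper's certificate takes $\phi(x)=\frac{\delta+p^2}{x}-p$ for $x\ge x_0=\sqrt{2(\delta+p^2)}$ (which agrees with your affine guess to first order at $x=a$ but has $\phi''(a)=2>0$) and is linear only below $x_0$.

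Second, even granting the right functional form, the global feasibility check is the substance of the proof and you have deferred it entirely (``which I expect to handle\ldots''). It is not a routine one-variable concavity argument: the verification requires a case analysis over the regions $x\lessgtr x_0$, $y\lessgtr x_0$ and the boundary of the square for both dual inequalities (this occupies Appendix~G of the paper). As written, the proposal is a correct plan for the ranges $p\in[1/3,2/3]$ and for the primal construction, but the dual half of the argument for $p\in(0,1/3)$ is both incomplete and, in the specific form proposed, incorrect.
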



    Our linear programming duality method is not specific to the covariance or even quadratic objectives. We illustrate the generality by establishing a tight upper bound for $\E\{|X_{1} - X_{2}|^\alpha\}$. The bound for $\alpha \in (0, \alpha_0]$, where $\alpha_0 = 2.25751\dots$, was established by \cite*{arieli2023persuasion} as 
     $$\max_{(X_{1}, X_{2}) \in \mathcal{C}_{{1}/{2}}}\E[|X_{1} - X_{2}|^\alpha] = 2^{-\alpha}.$$
    
    While we get a bound for all $\alpha > 0$.
    
    \begin{theorem}
        \label{difference_tigh_bound}
      Let $f(x_1, x_2) = |x_1-x_2|^\alpha$ where $\alpha > 0$ and $p=1/2$. Then
       \begin{equation*}
          \max_{\mu  \in \mathcal{C}_\frac 12} \int f(x_1, x_2) d\mu  =
          \begin{cases}
            2^{-\alpha} & \text{ for } \alpha < \alpha_0\\
          
          2 \cdot \left(\frac{3\alpha + 1 - \sqrt{\alpha^2 +6\alpha + 1}}{2\alpha}\right)^\alpha \cdot  \frac{-(\alpha + 1) + \sqrt{\alpha^2 +6\alpha + 1}}{ \alpha - 1 + \sqrt{\alpha^2 +6\alpha + 1}} & \text{ for } \alpha \ge \alpha_0,
          \end{cases}
       \end{equation*}
       where $\alpha_0 = 2.25751\dots$ is the point, at which the function $ 2 \cdot \left(\frac{3\alpha + 1 - \sqrt{\alpha^2 +6\alpha + 1}}{2\alpha}\right)^\alpha \cdot  \frac{-(\alpha + 1) + \sqrt{\alpha^2 +6\alpha + 1}}{ \alpha - 1 + \sqrt{\alpha^2 +6\alpha + 1}}$ equals  $2^{-\alpha}$ .
    \end{theorem}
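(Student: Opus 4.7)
The plan is to apply the linear programming duality framework already used for Theorem~\ref{cov_tight_bound}. For $p = 1/2$ and $f(x_1,x_2) = |x_1-x_2|^\alpha$, the dual problem seeks functions $\phi_1,\phi_2\colon[0,1]\to\mathbb R$ and constants $c_0,c_1$ minimizing $(c_0+c_1)/2$ under the two pointwise inequalities
\begin{equation*}
  c_0 \geq |x_1-x_2|^\alpha + x_1\phi_1(x_1) + x_2\phi_2(x_2),\qquad c_1 \geq |x_1-x_2|^\alpha - (1-x_1)\phi_1(x_1) - (1-x_2)\phi_2(x_2)
\end{equation*}
on $[0,1]^2$. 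Because $f$ and $p$ are invariant under coordinate swap and under $X\mapsto 1-X$, I restrict attention to $\phi_1 = \phi_2 =: \phi$ antisymmetric around $1/2$ and to $c_0 = c_1$.

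In the regime $\alpha<\alpha_0$ the primal candidate is $\mu = \tfrac12\delta_{(0,1/2)}+\tfrac12\delta_{(1,1/2)}$, realised by one perfectly informed and one uninformed expert; it is coherent, integrates $f$ to $2^{-\alpha}$, and a matching dual certificate is already supplied by \cite{arieli2023persuasion}. For $\alpha\geq\alpha_0$ I set $q = q(\alpha)\in(0,1)$ to be the positive root of $\alpha q^2+(\alpha+1)q = 1$ and $a := 1-q$; a direct manipulation shows that this $a$ equals the quantity $c$ appearing in the theorem. I propose as primal the six-atom distribution supported on
\begin{equation*}
  S = \{(0,a),(a,0),(1-a,1),(1,1-a)\}\cup\{(a,a),(1-a,1-a)\},
\end{equation*}
with mass $(1-a)/(2(2-a))$ on each corner atom and $a/(2(2-a))$ on each diagonal atom. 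Coherence is verified by taking $X = 1$ at $(a,a),(1-a,1),(1,1-a)$ and $X = 0$ at the other three atoms, and checking the conditional-expectation identity on each horizontal and vertical fibre of $S$; a direct calculation then gives $\int f\,d\mu = 2a^\alpha(1-a)/(2-a)$, which via the identity $\alpha q(1+q) = 1-q$ rewrites as the expression stated in the theorem.

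The matching dual follows from complementary slackness at the six primal atoms: the three atoms with $X = 0$ activate the first inequality, the three with $X = 1$ activate the second, and after reduction by symmetry this produces a linear system whose unique solution is $\phi(a) = -a^\alpha/(2-a)$, $\phi(1-a) = a^\alpha/(2-a)$, $c_0 = c_1 = 2a^\alpha(1-a)/(2-a)$, so the dual value equals the primal. The principal obstacle is extending $\phi$ from $\{a,1-a\}$ to a continuous, antisymmetric function on $[0,1]$ for which both pointwise inequalities hold uniformly on $[0,1]^2$. I would take $\phi$ to be piecewise linear on the partition $\{0,1-a,a,1\}$, pin down $\phi(0)$ and $\phi(1)$ by activating the inequalities at the corners $(0,1)$ and $(1,0)$, and verify the inequalities globally by reducing, for each fixed $x_1$, to a one-dimensional problem in $x_2$ whose extrema land at $\{0,1-a,a,1\}$ or at a finite list of critical points of $|x_1-x_2|^\alpha$. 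The crossover $\alpha = \alpha_0$ emerges as the unique value at which $2a(\alpha)^\alpha(1-a(\alpha))/(2-a(\alpha)) = 2^{-\alpha}$, which guarantees continuity of the optimal value across the two regimes.
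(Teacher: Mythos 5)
Your setup is the paper's: the same LP-duality reduction, the same symmetrization to a single antisymmetric multiplier $s$ with $s(x)+s(1-x)=0$ and a single constraint $|x-y|^\alpha+s(x)x+s(y)y\le \mathrm{opt}(\alpha)$, and (up to the reparametrization $a\mapsto 1-a$) exactly the paper's six-atom primal, whose value you correctly rewrite as the stated expression. Your complementary-slackness values at the atoms, $\phi(a)=-a^\alpha/(2-a)$ and its reflection, also agree with the paper's certificate. The gap is where you yourself flag it: the extension of $\phi$ to all of $[0,1]$, and your proposed fix does not work. Taking $x_1=0$ in the dual constraint forces $\phi(y)\le h(y):=\bigl(\mathrm{opt}(\alpha)-y^\alpha\bigr)/y$ for \emph{every} $y$, and your breakpoints pin $\phi(a)=h(a)$ and $\phi(1)=h(1)$ (in your variables). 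A linear piece on $[a,1]$ is then the chord of $h$, which lies \emph{above} $h$ wherever $h$ is convex; since $h''(y)=2\,\mathrm{opt}(\alpha)y^{-3}-(\alpha-1)(\alpha-2)y^{\alpha-3}$ is positive on $[a,1]$ for $\alpha$ near $\alpha_0$ (e.g.\ at $\alpha=\alpha_0$ one gets $h''(1)\approx 0.09>0$), the constraint at $(0,y)$ is violated on the interior of that piece. So a piecewise-linear certificate on $\{0,1-a,a,1\}$ cannot close the argument.

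The paper's construction instead \emph{saturates} the binding edge constraint: it sets $s(y)=h(y)$ on $[y_0,1]$ with $y_0=\max\bigl(((1-\mathrm{opt}(\alpha))/\alpha)^{1/(\alpha-1)},\tfrac12\bigr)$ (the threshold coming from a first-order condition at $x=0$), defines $s$ on $[0,1-y_0]$ by the antisymmetry $s(x)=-s(1-x)$, and sets $s\equiv 0$ on the middle interval. Even with this correct $s$, verifying the two-variable inequality globally is not a finite linear system: the paper reduces part of it analytically (the diagonal $x=y$ reproduces the optimality equation $\alpha x_0^2+(\alpha+1)x_0=1$, which is where the crossover at $\alpha_0$ appears) and checks the rest by a numerical maximization in Mathematica. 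To complete your proof you would need to replace the piecewise-linear ansatz by this (or another genuinely admissible) extension and actually carry out the global verification; as written, the dual half of the argument is not established.
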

    
    The asymptotic behaviour of  $\max_{\mu \in  \mathcal{C}_{\frac 12}}\int |x_1 - x_2|^\alpha d\mu$ was studied  by~\cite{cichomski2023coherent}, Whose approach is more combinatorial.  For large  $\alpha$, the expression in  Theorem~\ref{difference_tigh_bound} simplifies (see Lemma~\ref{asymp} for details), and we obtain $\max_{\mu \in  \mathcal{C}_{\frac 12}}\int |x_1 - x_2|^\alpha d\mu \simeq \frac{1}{\alpha} \cdot \frac{2}{e} $ as~$\alpha\to \infty$, recovering the result of \cite{cichomski2023coherent}

    
    Besides, Theorem~\ref{difference_tigh_bound} provides a comprehensive answer to the open question formulated by \cite{burdzy2019bounds}, who asked what is a maximum of $\E[|X_{1} - X_{2}|^\alpha]$ for $({X_{1}, X_{2}) \in \mathcal{C}_{1/2}}$.

    \paragraph{Related literature.} 
    Coherent distributions were introduced  by~\cite{Dawid1995CoherentCO}. The study of coherent distributions was reinspired by \cite{burdzy2019bounds}. For instance, they found a tight bound for $\E[\one_{\{|X_{1} - X_{2}| \ge \delta\}}]$ and formulated multiple conjectures that inspired future studies of coherent distributions \citep*{Burdzy_Pal, 10.1214/21-EJP675,ARIELI2019185, arieli2023persuasion, he2023private}.


    From the probabilistic point of view, coherent distributions are closely related to copulas \citep{Zhu} and distributions with given marginals~\citep*{arieli2023persuasion, 10.1214/aoms/1177700153}. From the combinatorial perspective, coherent distributions arise in graph theory~\citep{cichomski2020maximal, cichomski2022combinatorial, tao2005szemeredis} and combinatorial matrix theory\citep{brualdi_2006, ryser_1957}.
    

    \section{Hilbert space approach and proof of Theorem~\ref{th1.1}}
       In this section, we show how to bound $\E\left[\alpha (X_{1} - X_{2})^2 + \beta(X_{1} + X_{2} - 2p)^2 \right]$ for coherent random vectors $(X_{1}, X_{2})$
       via a Hilbert space approach, study when the resulting bound is tight and obtain Theorem~\ref{th1.1} as a corollary.

        \bigskip

        The idea behind the Hilbert space approach originated in \citep*{cichomski2020maximal, arieli2020feasible} and is as follows.
        Let $(\Omega, \mathcal F, \P)$ be a probability space, and~$\mathcal H = \mathcal L_2(\Omega, \mathcal F, \P)$ be the corresponding Hilbert space with the inner product ${\langle Y, Z\rangle = \E[YZ]}.$ Let $X$ be a Bernoulli random variable with the success probability $p$, defined on a probability space $(\Omega, \mathcal F, \P)$, where $p$ is a fixed parameter. Suppose $\F_1, \F_2$ are sub sigma algebras of the sigma algebra $\F$, then we define random variables $X_{1}, X_{2}$ as~${X_{i} = \E[X| \F_i]}$.

        Initially, we center random variables $X, X_{1}, X_{2}$. Let ${\hat X = X - p}$ and ${\hat X_{i} = X_{i} - p = \E[\hat X| \mathcal F_i]}$.  Then we consider them as vectors in a Hilbert space $\mathcal H$. Random variables $\hat X_{i}$ are expectations of the random variable $X$, so, as vectors, $\hat X_{i}$ are projections of $\hat X$, which means that the angle between vectors~${\hat X_{i}}$ and~${\hat X - \hat X_{i}}$  equals $\frac {\pi}{2}$. So the end of the vector $\hat X_{i}$ lies on a sphere with diameter $\hat X$. This leads to the following observation.

            Let $\mathcal H = \mathcal L_2(\Omega, \mathcal F, \P)$ and $S \subset \mathcal H$ be a sphere with diameter $X$. Then 
            \begin{multline*}
                 \max_{(X_{1}, X_{2}) \in \mathcal C_p} \E\left[ \alpha (X_{1} - X_{2})^2 +  \beta(X_{1} + X_{2} -2p)^2 \right]  \le  \sup_{\hat X_{1}, \hat X_{2} \in S} \left( \alpha \|\hat X_{1} - \hat X_{2}\|^2 + \beta \|\hat X_{1} + \hat X_{2}\|^2 \right).
            \end{multline*} 

  
    The expression $\alpha \|\hat X_{1} - \hat X_{2}\|^2 + \beta \|\hat X_{1} + \hat X_{2}\|^2$ depends only on the mutual arrangement of vectors $\hat X, \hat X_{1}, \hat X_{2}$, so it is sufficient to consider only a sphere in $\R^3$ and the maximum instead of the supremum.

        \begin{lemma}[\cite*{cichomski2020maximal, arieli2020feasible}]
            \label{reduce_to_R}\
    
            Let~${S_2 \subset \R^3}$ be a sphere with diameter $\|X\|_{\mathcal H} = \sqrt{p(1-p)}$ and containing the point $0$. Then   
            $$ \max_{(X_{1}, X_{2}) \in \feasible} \E\left[ \alpha (X_{1} - X_{2})^2 + \beta(X_{1} + X_{2} -2p)^2 \right]  \le \max_{x_1, x_2 \in S_2} \left(\alpha \|x_1 - x_2\|^2 + \beta \|x_1 + x_2\|^2 \right).$$
        \end{lemma}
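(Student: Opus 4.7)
The plan is to reduce the infinite-dimensional optimisation over $\mathcal H$ to a finite-dimensional one on a sphere in $\R^3$, by observing that both the target functional and the sphere constraint depend on the projections only through the Gram matrix of the triple $(\hat X, \hat X_1, \hat X_2)$.

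First I would rewrite the objective in inner-product form,
\[
\alpha\|\hat X_1 - \hat X_2\|^2 + \beta\|\hat X_1 + \hat X_2\|^2 = (\alpha+\beta)\bigl(\|\hat X_1\|^2 + \|\hat X_2\|^2\bigr) + 2(\beta-\alpha)\langle \hat X_1, \hat X_2\rangle,
\]
exhibiting the value as a function of $\|\hat X_1\|^2$, $\|\hat X_2\|^2$, and $\langle \hat X_1, \hat X_2\rangle$ alone. Next I would re-express the membership $\hat X_i \in S$ as the right-angle relation $\langle \hat X_i, \hat X - \hat X_i\rangle = 0$, equivalently $\|\hat X_i\|^2 = \langle \hat X_i, \hat X\rangle$. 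Together with $\|\hat X\|_{\mathcal H}^2 = p(1-p)$, admissibility is thus encoded purely in the $3\times 3$ Gram matrix $G$ of $(\hat X, \hat X_1, \hat X_2)$.

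The key step is a Gram-matrix realisation: since $G$ is positive semidefinite of order three, there exist vectors $v, x_1, x_2 \in \R^3$ with exactly this Gram matrix. By construction $\|v\| = \sqrt{p(1-p)}$ and $\|x_i\|^2 = \langle x_i, v\rangle$, so $x_1, x_2$ lie on the sphere in $\R^3$ of diameter $v$ passing through the origin. Applying a rigid motion of $\R^3$ I may replace this sphere by any pre-chosen $S_2$ of the same diameter through $0$ without altering any inner product; the objective on the $\R^3$ side then coincides with the one on the $\mathcal H$ side, and taking the supremum on the left yields the inequality of the lemma. Compactness of $S_2$ upgrades the supremum to a maximum.

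I do not expect a serious obstacle: the argument is essentially a restatement of the elementary fact that the pairwise inner products of three Hilbert-space vectors can always be realised by three vectors in $\R^3$. The only point warranting a moment of care is the degenerate case in which $\hat X, \hat X_1, \hat X_2$ are linearly dependent; then the realisation $(v, x_1, x_2)$ lives in a proper affine subspace of $\R^3$ but may still be rotated to sit on any prescribed $S_2$.
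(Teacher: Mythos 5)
Your argument is correct and is essentially the paper's own proof: the paper also reduces to $\R^3$ by restricting to the linear span $\mathrm{Lin}(\hat X,\hat X_1,\hat X_2)$ and applying an isometry onto $S_2$, which is exactly your Gram-matrix realisation in different clothing, and it likewise finishes by compactness of $S_2$. The only presentational difference is that you encode the sphere membership via $\|\hat X_i\|^2=\langle \hat X_i,\hat X\rangle$ and realise the positive semidefinite Gram matrix directly, whereas the paper speaks of an isometric embedding of the (at most three-dimensional) span; no substantive gap either way.
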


        We prove Lemma~\ref{reduce_to_R} in Appendix~\ref{appendix_reduce_to_R} for the reader's convenience. Our goal is to characterize when the bound from Lemma~\ref{reduce_to_R} is tight.

        First, we calculate the expression $\displaystyle \max_{x_1, x_2 \in S_2} \left(\alpha \|x_1 - x_2\|^2 + \beta \|x_1 + x_2\|^2 \right)$ via the standard Lagrange multipliers method, obtaining the following lemma proved in Appendix~\ref{Appendix_sphereoptlemma}:
        \begin{lemma}
            \label{sphereoptlemma}
            Let $S_2 \subset \R^3$ be the sphere built on a vector $w$ as a diameter. Then

            \begin{equation*}
                \max_{x_1, x_2 \in S_2} \left( \alpha \|x_1 - x_2\|^2 + \beta \| x_1 + x_2\|^2 \right) = \begin{cases}
                    \|w\|^2  \frac{\alpha^2}{\alpha - \beta} & \text{if } \alpha \ge \max(0, 2\beta) \\
                    \|w\|^2 \cdot \max\left(0, 4\beta, \alpha + \beta \right) & \text{otherwise}.
                \end{cases}
            \end{equation*}
            
            Moreover, in the case $\alpha > 0 \ge \beta$, the value $\|w\|^2 \frac{\alpha^2}{\alpha - \beta}$ is reached on those and only those pairs of vectors $x_1, x_2 \in S$ such that $x_1 + x_2 = \frac\alpha{\alpha - \beta}w$.
        \end{lemma}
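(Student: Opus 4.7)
The plan is to parametrize admissible configurations by two scalars and then maximize by elementary calculus. First, I would observe that $x_i \in S_2$ is equivalent to $\|x_i\|^2 = \langle x_i, w\rangle$, so introducing $s := x_1 + x_2$ and $d := x_1 - x_2$ recasts the two sphere conditions as
\[
\|s\|^2 + \|d\|^2 = 2\langle s, w\rangle \qquad \text{and} \qquad \langle s - w,\, d\rangle = 0.
\]
Because the objective $\alpha\|d\|^2 + \beta\|s\|^2$ depends only on the norms of $s$ and $d$, and because the second (linear) constraint merely restricts $d$ to a $2$-dimensional subspace of $\R^3$ without constraining its length, it suffices to maximize the objective subject to the first relation alone.

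Next, I would decompose $s = \lambda w + s^\perp$ with $s^\perp \perp w$; the remaining constraint becomes $\|s^\perp\|^2 + \|d\|^2 = \|w\|^2 \lambda(2 - \lambda)$, which is feasible iff $\lambda \in [0, 2]$. Writing $\|d\|^2 = t\,\|w\|^2 \lambda(2 - \lambda)$ with $t \in [0, 1]$, the objective collapses to
\[
F(\lambda, t) = \|w\|^2 \bigl[ (\alpha - \beta)\, t\, \lambda(2 - \lambda) + 2\beta\lambda \bigr],
\]
so the original problem reduces to maximizing $F$ on the rectangle $[0, 2] \times [0, 1]$.

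The main step, routine but a little tedious, is the case analysis. Since $\lambda(2 - \lambda) \ge 0$, the optimal $t$ is $1$ when $\alpha \ge \beta$ and $0$ when $\alpha \le \beta$. In the first subcase one maximizes the concave quadratic $g(\lambda) := 2\alpha\lambda - (\alpha - \beta)\lambda^2$; its unconstrained critical point $\lambda^\star = \alpha/(\alpha - \beta)$ lies in $[0, 2]$ precisely when $\alpha \ge \max(0, 2\beta)$, yielding the interior value $\|w\|^2 \alpha^2/(\alpha - \beta)$. Otherwise $g$ is maximized at an endpoint of $[0, 2]$, contributing $0$ or $4\beta\|w\|^2$. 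The subcase $\alpha \le \beta$ reduces $F$ to the linear function $2\beta\lambda\|w\|^2$, again maximized at an endpoint. Together with the evaluation $F(1, 1) = (\alpha + \beta)\|w\|^2$, this list of candidates yields the stated formula.

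For the \emph{moreover} claim, under $\alpha > 0 \ge \beta$ the strict inequalities $\alpha > \beta$, $\alpha > 0$, and $\alpha > 2\beta$ all hold, so $t = 1$ is the unique optimum in $t$ and strict concavity of $g$ combined with $\lambda^\star \in (0, 2)$ makes $\lambda^\star$ the unique optimum in $\lambda$. Thus $s^\perp = 0$ and $s = \tfrac{\alpha}{\alpha - \beta} w$ hold at every maximizer, i.e.\ $x_1 + x_2 = \tfrac{\alpha}{\alpha - \beta} w$. Conversely, the first constraint then pins down $\|d\|^2 = \tfrac{\alpha(\alpha - 2\beta)}{(\alpha - \beta)^2}\|w\|^2$, and substituting into the objective returns $\|w\|^2 \alpha^2/(\alpha - \beta)$. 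The only genuine obstacle is careful bookkeeping across the sign subcases; after the reduction, the underlying optimization is elementary one-variable calculus.
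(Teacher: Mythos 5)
Your proof is correct, and it takes a genuinely different route from the paper. The paper attacks the problem with Lagrange multipliers on the two sphere constraints: it shows that any non-degenerate critical configuration satisfies $x_1+x_2=\frac{\alpha}{\alpha-\beta}w$, computes the objective there, and then compares the resulting candidate values $\{0,\,4\beta,\,\alpha+\beta,\,\frac{\alpha^2}{\alpha-\beta}\}\cdot\|w\|^2$ by hand, treating the collinear and coincident cases separately. You instead pass to $s=x_1+x_2$, $d=x_1-x_2$, observe that the two sphere conditions are exactly equivalent to $\|s\|^2+\|d\|^2=2\langle s,w\rangle$ together with $\langle d,\,s-w\rangle=0$, and note that in $\R^3$ the second condition never obstructs realizing a prescribed $\|d\|$; this collapses the problem to maximizing $F(\lambda,t)=\|w\|^2\left[(\alpha-\beta)t\lambda(2-\lambda)+2\beta\lambda\right]$ over the compact rectangle $[0,2]\times[0,1]$. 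Your reduction buys several things the paper's argument leaves implicit: attainment of the maximum and the treatment of boundary and degenerate configurations come for free from compactness of the rectangle, the condition $\alpha\ge\max(0,2\beta)$ appears transparently as ``the critical point $\lambda^\star=\alpha/(\alpha-\beta)$ lies in $[0,2]$,'' and the uniqueness claim in the \emph{moreover} part follows cleanly from strict concavity of $g$ plus strict positivity of the coefficient of $t$, rather than from a separate inspection of when the Lagrange system degenerates. The paper's approach, in exchange, is more directly geometric and produces the characterization $x_1+x_2=\frac{\alpha}{\alpha-\beta}w$ as a stationarity condition rather than as a consequence of the parametrization. The only places where your sketch leans on the reader are the final bookkeeping that the attained endpoint value $\max(0,4\beta)$ agrees with the stated $\max(0,4\beta,\alpha+\beta)$ (immediate from concavity of $g$, since $g(1)=\alpha+\beta$ cannot exceed the larger endpoint value when $\lambda^\star\notin[0,2]$) and the degenerate case $\alpha=\beta$, where $g$ is linear rather than strictly concave; neither affects correctness.
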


        Combining this lemma with lemma \ref{reduce_to_R}, we obtain the following upper bound \\ on~$\E\left[\alpha (X_{1} - X_{2})^2 + \beta(X_{1} + X_{2} - 2p)^2 \right]$.

        \begin{corollary} The following identity holds
            \begin{equation*}
                \max_{(X_{1}, X_{2}) \in \mathcal{C}_p} \E\left[ \alpha (X_{1} - X_{2})^2 + \beta(X_{1} + X_{2} -2p)^2 \right]  \le  
                \begin{cases}
    				\frac{\alpha^2}{\alpha - \beta} \cdot p(1-p)  & \text{if } \alpha \ge \max(0, 2\beta) \\
    				\max\left(0, 4\beta, \alpha + \beta \right)\cdot p(1-p)  & \text{otherwise}.\\
			\end{cases}
            \end{equation*} 
            Moreover, in the case $\alpha \ge \max(0, 2\beta)  $, the value $\frac{\alpha^2}{\alpha - \beta} \cdot p(1-p)$ is reached on those and only those pairs of random variables $X_{1}, X_{2}$, such that $(X_{1} - p) + (X_{2} - p) = \frac\alpha{\alpha - \beta}(X - p)$. 
            \label{upper_bound_corollary}
        \end{corollary}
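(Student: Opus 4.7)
The plan is to obtain the corollary as an immediate combination of the two preceding lemmas. Lemma~\ref{reduce_to_R} bounds $\max_{(X_1,X_2)\in\mathcal{C}_p}\E[\alpha(X_1-X_2)^2+\beta(X_1+X_2-2p)^2]$ by the sphere optimum $\max_{x_1,x_2\in S_2}(\alpha\|x_1-x_2\|^2+\beta\|x_1+x_2\|^2)$ on the $\R^3$-sphere $S_2$ with diameter $\sqrt{p(1-p)}$. Lemma~\ref{sphereoptlemma} evaluates this sphere maximum in closed form, with $\|w\|^2$ factoring out as $p(1-p)$. Assembling the two gives the two-case upper bound of the corollary directly.

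For the moreover clause I would chase the equality condition through the inequality chain. If some $(X_1,X_2)\in\mathcal{C}_p$ attains $\frac{\alpha^2}{\alpha-\beta}p(1-p)$, then the three-dimensional representatives of $\hat X_1,\hat X_2$ produced by the Lemma~\ref{reduce_to_R} reduction lie on the sphere built on $w$ (the image of $\hat X$) and attain the sphere maximum. By the uniqueness clause in Lemma~\ref{sphereoptlemma} they must satisfy $\hat x_1+\hat x_2=\frac{\alpha}{\alpha-\beta}w$; since the reduction is an isometry on $\operatorname{span}(\hat X,\hat X_1,\hat X_2)$ respecting linear relations, this lifts back to $L_2$ as $(X_1-p)+(X_2-p)=\frac{\alpha}{\alpha-\beta}(X-p)$. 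Conversely, any $(X_1,X_2)\in\mathcal{C}_p$ satisfying this identity saturates the bound: using $\E[\hat X\hat X_i]=\|\hat X_i\|^2$ (each $\hat X_i$ is the orthogonal projection of $\hat X$) one gets $\|\hat X_1\|^2+\|\hat X_2\|^2=\frac{\alpha}{\alpha-\beta}p(1-p)$ and $\|\hat X_1+\hat X_2\|^2=\frac{\alpha^2}{(\alpha-\beta)^2}p(1-p)$, after which a short expansion of $\alpha(X_1-X_2)^2+\beta(X_1+X_2-2p)^2$ returns exactly $\frac{\alpha^2}{\alpha-\beta}p(1-p)$.

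The one genuine subtlety is that Lemma~\ref{sphereoptlemma} spells out its equality characterization only under $\alpha>0\ge\beta$, whereas the corollary claims it on the wider range $\alpha\ge\max(0,2\beta)$, which admits $\beta\in(0,\alpha/2]$. I would close this gap by revisiting the Lagrange-multiplier analysis behind Lemma~\ref{sphereoptlemma} and checking that for every $\beta\le\alpha/2$ the only sphere configurations reaching the maximum continue to lie on the hyperplane $x_1+x_2=\frac{\alpha}{\alpha-\beta}w$. This is the one place the argument requires care, particularly at the boundary $\beta=\alpha/2$ where the formula transitions into the $4\beta$ branch, but no ideas beyond the Lagrangian already in use are needed.
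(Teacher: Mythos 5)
Your proposal is correct and follows exactly the paper's route: the corollary is obtained by chaining Lemma~\ref{reduce_to_R} with Lemma~\ref{sphereoptlemma} (with $\|w\|^2=p(1-p)$), and the paper itself offers no further argument beyond this combination. Your observation that Lemma~\ref{sphereoptlemma} states its uniqueness clause only for $\alpha>0\ge\beta$ while the corollary asserts it for all $\alpha\ge\max(0,2\beta)$ is a genuine discrepancy the paper leaves unaddressed, and your plan to close it by rerunning the Lagrange analysis — in particular at the boundaries $\beta=0$ and $\alpha=2\beta$, where the alternative critical configurations $x_1+x_2=w$ and $x_1=x_2=w$ still happen to satisfy $x_1+x_2=\frac{\alpha}{\alpha-\beta}w$ — is the right fix.
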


      This method provides us with an upper bound but does not say anything about whether this bound is tight or not. The values $0, 4\beta p(1-p), (\alpha + \beta)\cdot p(1-p)$ can be easily achieved:

    \begin{itemize}
        \item Let $X_{1} = X_{2} = \E[X] = p$. Then $\E\big[ \alpha (X_{1} - X_{2})^2 + \beta(X_{1} + X_{2} -2p)^2\big] = 0$.
        \item Let $X_{1} = X_{2} = X$. Then $\E\big[  \alpha (X_{1} - X_{2})^2 + \beta(X_{1} + X_{2} -2p)^2\big] = 4\beta p(1-p)$.
        \item Let $X_{1} = X, X_{2} = \E[X] = p$. Then $\E\big[ \alpha (X_{1} - X_{2})^2 + \beta(X_{1} + X_{2} -2p)^2 \big] = (\alpha + \beta)\cdot p(1-p)$.
    \end{itemize}

    Thus, this bound is tight if the condition $\alpha > \max(0, 2\beta)$ is not satisfied. The case $\alpha > \max(0, 2\beta)$ is more challenging. In this case, the bound is tight if and only if there exists a pair of random variables $(X_{1}, X_{2})  \in \mathcal{C}_p$ such that $(X_{1} - p) + (X_{2} - p) = \frac{\alpha}{\alpha - \beta}(X - p)$. Let's consider this situation more closely.
    
    \bigskip
    We start with a simple proposition.

    \begin{proposition}
        Let $A$ be a Borel subset of $\R$. If $A \subset (0, +\infty)$, then 
        $$ \P\Big(X_{1} \in A, X = 1\Big) = 0 \Longrightarrow  \P\Big(X_{1} \in A, X = 0\Big) = 0,$$
        and if $A \subset (-\infty, 1)$, then 
        $$ \P\Big(X_{1} \in A, X = 0\Big) = 0 \Longrightarrow \P\Big(X_{1} \in A, X = 1\Big) = 0.$$
        \label{prop}
    \end{proposition}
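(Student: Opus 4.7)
The plan is to use the defining property of conditional expectation combined with the fact that on the event $\{X_1 \in A\}$ the random variable $X_1$ is bounded away from a particular value.

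First I would handle the case $A \subset (0, +\infty)$. Since $\{X_1 \in A\} \in \mathcal{F}_1$, the defining property of $X_1 = \E[X\mid\mathcal{F}_1]$ gives
\begin{equation*}
\E\bigl[X \cdot \one_{\{X_1 \in A\}}\bigr] = \E\bigl[X_1 \cdot \one_{\{X_1 \in A\}}\bigr].
\end{equation*}
Because $X \in \{0,1\}$, the left-hand side equals $\P(X=1,\,X_1 \in A)$. If this probability is zero, then $\E[X_1 \cdot \one_{\{X_1 \in A\}}] = 0$. On the event $\{X_1 \in A\}$, the integrand $X_1$ is strictly positive (as $A \subset (0,+\infty)$), so the vanishing expectation forces $\P(X_1 \in A) = 0$. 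In particular, $\P(X_1 \in A,\, X=0) = 0$.

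For the second case $A \subset (-\infty, 1)$, I would apply the same idea to the complementary Bernoulli variable $1 - X$, whose conditional expectation is $1 - X_1$. Again using $\{X_1 \in A\} \in \mathcal{F}_1$,
\begin{equation*}
\P(X=0,\,X_1 \in A) = \E\bigl[(1-X)\cdot \one_{\{X_1 \in A\}}\bigr] = \E\bigl[(1-X_1)\cdot \one_{\{X_1 \in A\}}\bigr].
\end{equation*}
On $\{X_1 \in A\}$ the factor $1 - X_1$ is strictly positive since $A \subset (-\infty, 1)$, so vanishing of the left-hand side forces $\P(X_1 \in A) = 0$, hence $\P(X_1 \in A,\, X=1) = 0$.

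There is no real obstacle here; the only subtlety is remembering that $\{X_1 \in A\}$ is $\mathcal{F}_1$-measurable (so that $X_1$ can be substituted for $X$ inside the expectation) and that $X_1$ and $1 - X_1$ are pointwise nonnegative, making the sign argument work. Intuitively, the proposition expresses the fact that an expert cannot assign positive posterior probability to $\{X=1\}$ on a set where $\{X=1\}$ has zero joint probability, and symmetrically for $\{X=0\}$.
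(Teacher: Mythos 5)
Your proof is correct and follows essentially the same route as the paper: both identify $\P(X_1\in A,\,X=1)$ with $\E[X_1\one_{\{X_1\in A\}}]$ via $\F_1$-measurability of the event and then use strict positivity of the integrand on $\{X_1\in A\}$. The paper omits the second case as ``identical''; your explicit treatment via $1-X$ and $1-X_1$ is exactly that identical argument written out.
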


    \begin{proof}
        We will prove only the first statement; the proof of the second is identical and thus omitted.
        To begin, note that $\P(X_{1} \in A, X = 1) = \E[X\one_{X_{1} \in A}]$. The random variable $\one_{X_{1} \in A}$ is measurable with respect to the sigma algebra $\F_1$, so
        $$\P(X_{1} \in A, X = 1) = \E[X\cdot \one_{X_{1} \in A}] =\E\big[\E[X\cdot \one_{X_1 \in A} | \F_1]\big]   = \E\big[\E[X|\F_1] \one_{X_{1} \in A}\big] = \E[X_{1}\one_{X_{1} \in A}].$$

        If $\P(X_{1} \in A, X = 1) = 0$ then $ \E[X_{1}\one_{X_{1} \in A}] = 0$, so $\P(X_{1} \in A) = 0$. On the other hand,~$\P(X_{1} \in A, X = 0) \le \P(X_{1} = 0) = 0$.        
    \end{proof}

       Let $a$ be a positive real number and there exist coherent random variables $(X_{1}, X_{2})  \in \mathcal{C}_p$, such that $(X_{1} - p )+ (X_{2} - p) = a(X - p)$, then $X_{1} + X_{2} = aX + (2 - a)p$. This means that if $X = 0$, the distribution of $(X_{1}, X_{2})$ is concentrated on a line $X_{1} + X_{2} = (2 - a)p$ and if $X = 1$, the distribution of $(X_{1}, X_{2})$  is concentrated on a line $X_{1} + X_{2} = a + (2 - a)p$. The following proposition states that if we find set $I$, such that $\P(X_{1} \in I, X = 1) = 0$, then sets $I + ka$ for all integer $k \ge 0$ have the same property.

       \begin{proposition}
            \label{induction_prop}
           Let $I$ be a Borel subset of a half-infinite interval $\Big(\max\big(0, (2-a)p - 1\big), +\infty\Big)$, such that $$\P(X_{1} \in I, X = 1) = 0.$$ Then 
           $$\P\Big(X_{1} \in (I + ka), X = 1\Big) = 0$$ for all  integer $k \ge 0$.
       \end{proposition}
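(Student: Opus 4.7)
The plan is induction on $k$, with the base case $k = 0$ given by hypothesis. The inductive step exploits the a.s.\ identity $X_1 + X_2 = aX + (2-a)p$ (coming from the equality $(X_1-p)+(X_2-p) = a(X-p)$), combined with Proposition~\ref{prop} applied alternately to $X_1$ and $X_2$. Note that Proposition~\ref{prop} applies to $X_2$ as well, since its proof uses only the property $X_i = \E[X|\F_i]$, which is symmetric in the two coordinates.

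Suppose $\P(X_1 \in I + ka, X = 1) = 0$. Since $I \subset (\max(0, (2-a)p-1), +\infty) \subset (0, +\infty)$ and $ka \geq 0$, the shifted set $I + ka$ is contained in $(0, +\infty)$, so the first half of Proposition~\ref{prop} yields $\P(X_1 \in I + ka, X = 0) = 0$. On the event $\{X = 0\}$ one has $X_2 = (2-a)p - X_1$ a.s., hence
\[
  \P\bigl(X_2 \in J_k,\, X = 0\bigr) = 0, \qquad \text{where } J_k := (2-a)p - (I + ka).
\]
I now check that $J_k \subset (-\infty, 1)$: every $t \in I$ satisfies $t > \max(0, (2-a)p - 1)$, so every element of $J_k$ is strictly less than $(2-a)p - ka - \max(0, (2-a)p-1)$, which equals $(2-a)p - ka$ when $(2-a)p \le 1$ and $1 - ka$ otherwise; in either case the bound is $\le 1$. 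Applying the second half of Proposition~\ref{prop} with $X_2$ in place of $X_1$ and $A = J_k$ gives $\P(X_2 \in J_k, X = 1) = 0$. Finally, on $\{X = 1\}$ the identity reads $X_1 = a + (2-a)p - X_2$ a.s., which maps $J_k$ to $a + (2-a)p - J_k = I + (k+1)a$. Therefore $\P(X_1 \in I + (k+1)a, X = 1) = 0$, completing the inductive step.

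The mechanism is essentially a \emph{two-step flip}: the equation $X_1 + X_2 = aX + (2-a)p$ converts a statement about $X_1$ into one about $X_2$ on each of the events $\{X = 0\}$ and $\{X = 1\}$, while Proposition~\ref{prop} is the tool that transfers a null-set statement from one event to the other. The only nontrivial ingredient is bookkeeping: verifying that the shifted sets $I + ka$ and $J_k$ stay inside the half-lines $(0,+\infty)$ and $(-\infty,1)$ required to invoke Proposition~\ref{prop}, which is precisely what the lower bound $\max(0, (2-a)p-1)$ on $I$ is engineered to guarantee.
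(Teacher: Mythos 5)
Your proof is correct and follows essentially the same route as the paper: induction on $k$, with the inductive step being the same two-step flip through Proposition~\ref{prop} applied first to $X_1$ on $\{X=0\}$ and then to $X_2$ on $\{X=1\}$, using $X_1+X_2=aX+(2-a)p$ to translate between the two coordinates. You are in fact somewhat more careful than the paper in verifying that $I+ka\subset(0,+\infty)$ and that $(2-a)p-(I+ka)\subset(-\infty,1)$, and in noting explicitly that Proposition~\ref{prop} applies symmetrically to $X_2$.
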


       \begin{proof}       
           We prove this proposition via induction on $k$. The base for $k = 0$ is given in the statement, so we only have to discuss the induction step. Let  $\P\big(X_{1} \in (I + ka), X = 1\big) = 0$, then by Proposition~\ref{prop}, we have

           \begin{gather*}
               0 = \P\Big(X_{1} \in (I + ka), X = 0\Big) = \P\Big(X_{2} \in \big((2-a)p - (I + ka)\big), X = 0\Big).
           \end{gather*}

            From the statement of the proposition we have $I > (2-a)p - 1$, so $(2-a)p - I <1$ and we can apply the Proposition~\ref{prop} again.

            $$0 = \P\Big(X_{2} \in \left((2-a)p - (I + ka)\right), X = 1\Big) = \P\Big(X_{1} \in \big(I+(k+1)a\big), X = 1\Big).$$
           
       \end{proof}

       Thus, we want to find a set $I$, such that $\P(X_{1} \in I, X = 1) = 0$. Generally, there are three cases of how lines  $X_{1} + X_{2} = a + (2 - a)p$ and $X_{1} + X_{2} = (2 - a)p$ are arranged.

    \begin{itemize} \item 
    The first case: both lines $X_{1} + X_{2}  = (2 - a)p$ and $X_{1} + X_{2} = a + (2 - a)p$ are above the line $X_{1} + X_{2} = 1$.  This condition is equivalent to $(2-a)p - 1 > 0$.

    We have $X_{2} \le 1$ almost surely, so 
    $$0 =  P\Big(X_{2} \in (1, a+1), X = 1\Big) = P\Big(X_{1} \in \big((2 - a)p - 1, (2-a)p - 1 + a\big), X = 1\Big). $$

    Then if we let $I_k = \Big((2 - a)p - 1, (2-a)p - 1 + a\Big)$,  by the Proposition~\ref{induction_prop}, we obtain~${P(X_{1} \in I_k, X = 1)}$ and,  by the Proposition~\ref{prop}, $\P(X_{1} \in I_k, X = 0) = 0.$

    Also, note that 
    $$0 = \P\big(X_{2} =1 + a, X = 1\big)  = \P\big(X_{1} =(2-a)p -1, X = 1\big)$$
    and, by the Proposition~\ref{prop}, $\P\big(X_{1} =(2-a)p -1, X = 0\big)= 0$.

    Thus we proved that distribution $\mu$ of $(X_{1}, X_{2})$  is concentrated on points of the \\ form~${\Big(ka+(2-a)p - 1, 1 - (k-1)a\Big)}$ for integer $k \ge 1$ and $\Big(ka+(2-a)p - 1, 1 - ka\Big)$ for integer $k \ge 1$. Consider $k = \lceil \frac{2 - (2-a)p}{a} \rceil$  and the point $\Big(ka+(2-a)p - 1, 1 - (k-1)a\Big)$, where $\lceil x \rceil$ denotes the  smallest integer greater than or equal to $x$. If $ka+(2-a)p - 1 > 1$ then 
    
    $$0 = \P\big(X_{1} = ka+(2-a)p - 1, X = 1\big) = \P\big(X_{2} = 1 - (k-1)a, X = 1\big),$$  so, by the Proposition~\ref{prop}, 
    $$0 = \P\big(X_{2} = 1 - (k-1)a, X = 0\big) = \P\big(X_{1} = (k-1)a + (2-a)p, X = 0\big).$$ 
    We have $(k-1)a + (2-a)p < 1$, so we can apply Proposition~\ref{prop} another time, and obtain $$0 = \P\big(X_{1} = (k-1)a + (2-a)p, X = 1\big).$$ Thus the distribution $\mu$ is zero at all points which leads to contradiction. This means that $ka+(2-a)p - 1 = 1$, that is~${k = \frac{2-(2-a)p}{a} \in \N}$.

    We found a support of the distribution $\mu$ of $(X_{1}, X_{2})$ and obtained what $\mu$ is a ladder distribution, now we find a distribution itself.

    Note that 
    \begin{gather*}
    \P(X_{i} = t, X = 1) = \E[X_{i}\one_{X_{i} = t}] = t\cdot \P(X_{i} = t), \\ 
    \P(X_{i} = t, X = 0) = \P(X_{i} = t) -  \P(X_{i} = t, X = 1) = (1 - t) \cdot \P(X_{i} = t).
     \end{gather*}

    So if $t > 0$, then 
    $$\P(X_{i} = t, X = 0) = \frac{1-t}{t} \cdot \P(X_{i} = t, X = 1),$$ and if $t < 1$, then $$\P(X_{i} = t, X = 1) = \frac{t}{1 - t} \cdot \P(X_{i} = t, X = 0).$$ Thus, if we fix $\mu\big((a+(2-a)p - 1, 1\big) = q$, we find $\mu$ at 
    all points. Then, using a condition that $\mu$ is a distribution, we find $q$. Thus in our case, support of the ladder distribution determines distribution in all points.

    An example of ladder distribution for $p = \frac 2{3}, a = \frac 1{5}$ is depicted in Figure~\ref{ladder_example_0}.

    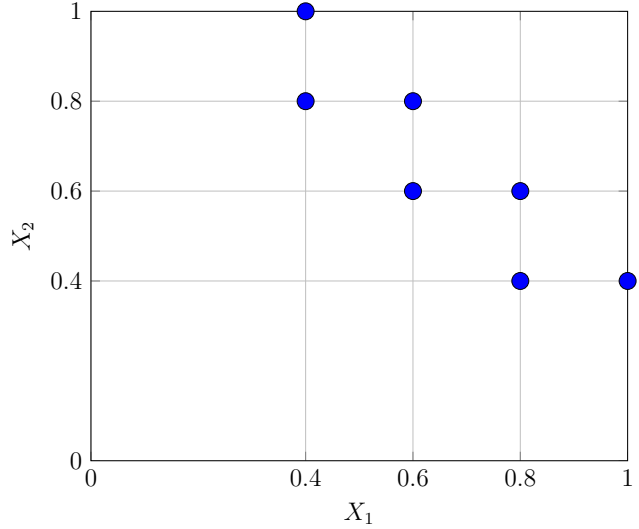
\begin{figure}[ht]
            \centering
            \begin{minipage}{0.4\linewidth}
           
                \centering
                \begin{tikzpicture}[scale = 0.8]
                  \begin{axis}[
                    xlabel={$X_{1}$},
                    ylabel={$X_{2}$},
                    xmin=0,
                    xmax=1,
                    ymin=0,
                    ymax=1,
                    xtick={0, 0.4, 0.6, 0.8, 1},
                    xmajorgrids=true,
                    ytick={0, 0.4, 0.6, 0.8, 1},
                    ymajorgrids=true,
                    legend pos=south west,
                  ]
                  \addplot[only marks, color=black, ,mark=*,mark options={scale=2, fill=blue}] coordinates {(0.4, 1) (0.4, 0.8) (0.6, 0.8) (0.6, 0.6) (0.8, 0.6) (0.8, 0.4) (1, 0.4)};
                  \end{axis}
                \end{tikzpicture}%
            \end{minipage}
            \caption{Support of $\mu$ for $p = \frac 23, a = \frac 15.$}
            \label{ladder_example_0}
    \end{figure}

    \item The second case: both lines are below the line $X_{1} + X_{2}$. This case boils down to the previous one. 
    To see this, we can consider $Y = 1 - X, Y_i = 1 - X_{i}$. Then $$Y_1 + Y_2 = 2 - X_{1} - X_{2} = 2 - aX - (2 - a)p = aY + (2 - a)(1 - p),$$ which leads us to the previous case and  the condition $\frac{2-(2-a)(1-p)}{a}\in \N$.

    \item The third case:    
    the line $X_{1} + X_{2} = a + (2 - a)p$ is above the line $X_{1} + X_{2} = 1$, and the line $X_{1} + X_{2} = (2 - a)p$ is below the line $X_{1} + X_{2} = 1$. 
    
    This case corresponds to $(2 - a)p - 1 < 0$ and $a + (2 - a)p > 1$.
    This means that
    $$0 = \P\Big(X_{1} \in \big((2 - a)p - 1\big)\Big) = \P\Big(X_{2} \in \big(1, a + (2 - a)p\big)\Big).$$

    Thus we proved that the distribution $\mu$ of $(X_{1}, X_{2})$ is zero on whole plane except two series of points: the first contains points of the form $\big((k+1)a + (2 - a)p - 1, 1-ka\big)$ for integer~$k \ge 0$, and the second contains points of the form $\big(ka, (2 - a)p-ka\big)$ for integer~$k \ge 0$. Each series ends when its point exits the square $[0, 1) \times (0, 1]$. Like in the first case, if this point is also out of the square $[0, 1] \times [0, 1]$, then the distribution of the vector $(X_{1}, X_{2})$ equals zero at this point, and, by Proposition~\ref{prop}, the distribution equals zero for all points in this series. Thus, there are three options for the distribution not to be zero on the whole plane:
    
    \begin{itemize}
        \item The last point of the first series lies on the line $X_{1} = 1$. Then its coordinates are~$\big(ka + (2 - a)p - 1, 1 - (k - 1)a\big)$, so $\frac{2 - (2 - a)p}{a} \in \N$. An example for $p = \frac 8{17}, a = \frac 3{10}$ is depicted in Figure~\ref{ladder1}.
        \item The last point of the second series lies on the line $X_{2} = 0$. Then its coordinates are~$\big(la, (2 - a)p - la\big)$, so $\frac{(2 - a)p}{a} \in \N$. Example for $p = \frac 6{11}, a = \frac 4{13}$ is depicted in Figure~\ref{ladder2}.
        \item The last point of the first series lies on the line $X_{2} = 0$. Then its coordinates are~$\big((k + 1)a + (2 - a)p - 1, 1 - ka\big)$, so $\frac{1}{a} \in \N$, and this is equivalent to the last point of the second series lying on the line $X_{1} = 1$.  Example for $p = \frac {1}{2}, a = \frac 13$ is depicted in Figure~\ref{ladder_convex_hull}.
    \end{itemize}
    
    As in the first case, if we know the total weight of the distribution $\mu$ on the ladder, then we know the weight of $\mu$ of each point of the ladder. So, in the first and the second situations, we know the whole distribution $\mu$, and in the third situation, the distribution $\mu$ is the convex hull of two ladder distributions corresponding to each series of points.

          \begin{figure}[h!]
            \centering
             \begin{minipage}{0.4\linewidth}
                \centering
                \begin{tikzpicture}[scale = 0.8]
                  \begin{axis}[
                    xlabel={$X_{1}$},
                    ylabel={$X_{2}$},
                    xmin=0,
                    xmax=1,
                    ymin=0,
                    ymax=1,
                    xtick={0,0.1, 0.4, 0.7, 1},
                    xmajorgrids=true,
                    ytick={0, 0.1, 0.4, 0.7, 1},
                    ymajorgrids=true,
                    legend pos=south west,
                  ]
                  \addplot[only marks, color=black, ,mark=*,mark options={scale=2, fill=blue}] coordinates {(0.1, 1) (0.1, 0.7) (0.4, 0.7) (0.4, 0.4) (0.7, 0.4) (0.7, 0.1) (1, 0.1)};
                  \end{axis}
                \end{tikzpicture}%
            \caption{Ladder for ${p = \frac 8{17}, a = \frac 3{10}}.$}
                 \label{ladder1}
            \end{minipage}
            \hfill
             \begin{minipage}{0.4\linewidth}
                \centering
                \begin{tikzpicture}[scale = 0.8]
                  \begin{axis}[
                    xlabel={$X_{1}$},
                    ylabel={$X_{2}$},
                    xmin=0,
                    xmax=1,
                    ymin=0,
                    ymax=1,
                    xtick={0, 4/13, 8/13, 12/13, 1},
                    xticklabels = {0, $\frac 4{13}$, $\frac8{13}$, $\frac {12}{13}$, 1},
                    xmajorgrids=true,
                    ytick={0, 4/13, 8/13, 12/13, 1},
                    yticklabels = {0, $\frac 4{13}$, $\frac8{13}$, $\frac {12}{13}$, 1},
                    ymajorgrids=true,
                    legend pos=south west,
                  ]
                  \addplot[only marks, color=black, ,mark=*,mark options={scale=2, fill=green}] coordinates {(0, 12/13) (4/13, 12/13) (4/13, 8/13) (8/13, 8/13) (8/13, 4/13) (12/13, 4/13) (12/13, 0)};
                  \end{axis}
                \end{tikzpicture}%
                \caption{Ladder for ${p = \frac 6 {11}, a = \frac {4}{13}}$.}
                \label{ladder2}
            \end{minipage}
        \end{figure}

        \begin{figure}[h!]
            \centering
             \begin{minipage}{0.4\linewidth}
                \centering
                \begin{tikzpicture}[scale = 0.8]
                  \begin{axis}[
                    xlabel={$X_{1}$},
                    ylabel={$X_{2}$},
                    xmin=0,
                    xmax=1,
                    ymin=0,
                    ymax=1,
                    xtick={0,1/6, 2/6, 3/6, 4/6, 5/6, 6/6},
                    xticklabels = {$0$,$\frac16$, $\frac26$, $\frac36$, $\frac46$, $\frac56$, $1$},
                    xmajorgrids=true,
                    ytick={0,1/6, 2/6, 3/6, 4/6, 5/6, 6/6},
                    yticklabels = {$0$,$\frac16$, $\frac26$, $\frac36$, $\frac46$, $\frac56$, $1$},
                    ymajorgrids=true,
                    legend pos=south west,
                  ]
                  \addplot[only marks, color=black, ,mark=*,mark options={scale=2, fill=blue}] coordinates {(5/6, 0) (5/6, 1/3) (5/6-1/3, 1/3) (5/6-1/3, 2/3) (5/6-2/3, 2/3) (5/6 - 2/3, 1)};
                  \addplot[only marks, color=black, ,mark=*,mark options={scale=2, fill=green}] coordinates {(0, 5/6) (1/3, 5/6) (1/3, 5/6-1/3) (2/3, 5/6-1/3) (2/3, 5/6-2/3) (1, 5/6 - 2/3) };
                  \end{axis}
                \end{tikzpicture}%
                \caption{Two ladders for ${p = \frac 12, a = \frac 13}$. The first one is coloured blue, the second one is colored green.}
                \label{ladder_convex_hull}
            \end{minipage}
        \end{figure}
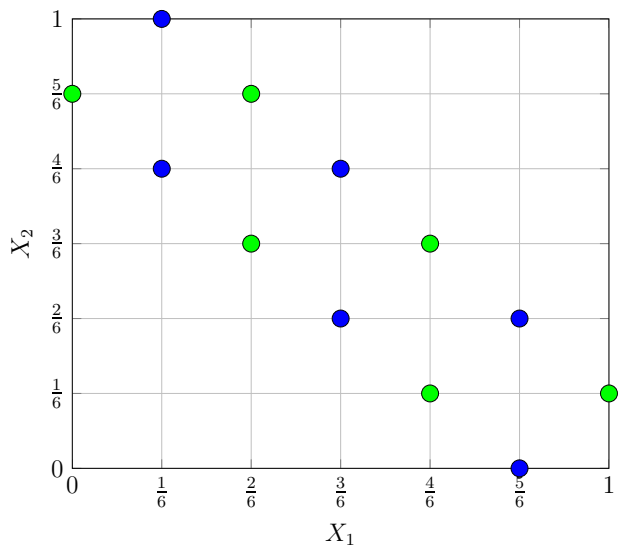        
    \end{itemize}

     Thus, by substituting $a = \frac{\alpha}{\alpha - \beta}$, we have identified all situations in which the upper bound stated in Corollary~\ref{upper_bound_corollary} is tight, leading us to the following theorem.

   \begin{theorem}
    \label{th1}
    Let 
    $$f(x_1,x_2)=\alpha(x_1-x_2)^2 + \beta(x_1+x_2-2p)^2.$$

    If  $\alpha < 0$ or $2\beta > \alpha$, then the following identity holds:

     $$\max_{\mu\in \mathcal{C}_p} \int f(x_1,x_2) d\mu = \max\left(0, 4\beta, \alpha + \beta \right)\cdot p(1-p).$$

    Otherwise, if $\alpha \ge \max(0, 2\beta)$, then the following bound holds:
    $$\max_{\mu\in \mathcal{C}_p} \int f(x_1,x_2) d\mu \leq p(1-p) \frac{\alpha^2}{\alpha - \beta}.$$

    
    Moreover, in this case, the equality is achieved if and only if there exists $N\in \mathbb{N}$ such that at least one of the following three conditions is satisfied:
    
    \begin{enumerate}
        \item  \label{case2'}

         $ \frac {\alpha - \beta}{\alpha} = N$ and $ (2 - \frac{\alpha}{\alpha - \beta})p + \frac{\alpha}{\alpha - \beta} \ge 1 \ge (2 - \frac{\alpha}{\alpha - \beta})p .$
    
        \item \label{case1'} 
        $\frac{2(\alpha - \beta) - (\alpha - 2\beta)p}{\alpha} = N$ and $(2 - \frac{\alpha}{\alpha - \beta})p + \frac{\alpha}{\alpha - \beta}> 1 $.
        
    
         \item  \label{case3'}

         $\frac{2(\alpha - \beta) - (\alpha - 2\beta)(1-p)}{\alpha} = N$ and $(2 - \frac{\alpha}{\alpha - \beta})p < 1$.
    \end{enumerate}
In the case of equality, the optimum is attained at a ladder distribution with $2N-1$ points in the support if condition~\ref{case2'} does not hold, and at a convex combination of two such distributions if it does hold.

\end{theorem}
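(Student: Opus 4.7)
The proof assembles the pieces already developed in this section. The upper bound is immediate from Corollary~\ref{upper_bound_corollary}, whose two branches correspond to the two regimes in the theorem. Tightness in the regime $\alpha < 0$ or $2\beta > \alpha$ follows from the three explicit constructions listed just after Corollary~\ref{upper_bound_corollary}: the choices $X_1 = X_2 = p$, $X_1 = X_2 = X$, and $X_1 = X, X_2 = p$ realize $0$, $4\beta p(1-p)$, and $(\alpha+\beta)p(1-p)$ respectively, and their maximum matches the upper bound.

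The substance of the theorem lies in the regime $\alpha \geq \max(0, 2\beta)$. By the ``moreover'' clause of Corollary~\ref{upper_bound_corollary}, equality holds if and only if there exists a coherent pair $(X_1, X_2) \in \mathcal{C}_p$ satisfying the affine relation $(X_1 - p) + (X_2 - p) = a(X - p)$ with $a = \alpha/(\alpha-\beta)$. Conditioning on $X$, the joint distribution is supported on two parallel lines: $X_1 + X_2 = (2-a)p$ when $X = 0$ and $X_1 + X_2 = a + (2-a)p$ when $X = 1$. I would split into the three geometric arrangements of these lines relative to the diagonal $X_1 + X_2 = 1$ already distinguished above. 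In each arrangement, iterated application of Propositions~\ref{prop} and~\ref{induction_prop} propagates zero-probability sets through the support, forcing it to be discrete and ladder-shaped; the chain either terminates by hitting an integrality condition on $a$ that matches one of the three cases in the theorem, or forces $\mu \equiv 0$, a contradiction. Substituting $a = \alpha/(\alpha-\beta)$ converts these integrality conditions on $a$ into the three algebraic conditions on $\alpha, \beta, p$ stated in the theorem.

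Once the ladder support is pinned down, the masses at each support point are determined up to a single normalization by the identities $\P(X_i = t, X = 1) = t \cdot \P(X_i = t)$ and $\P(X_i = t, X = 0) = (1-t) \cdot \P(X_i = t)$, and one verifies directly that the resulting atomic distribution is coherent (realizable as $(\E[X|\mathcal{F}_1], \E[X|\mathcal{F}_2])$ for suitable sigma-algebras). In the degenerate sub-case $1/a \in \N$ corresponding to condition~\ref{case2'}, two ladders with disjoint supports coexist, and any convex combination of the associated ladder distributions is simultaneously coherent and optimal, yielding the final clause. The main obstacle is the bookkeeping in the support-chasing argument, particularly in the third geometric case where two series of points must be tracked simultaneously and the termination of each series depends on which boundary of $[0,1]^2$ it first hits; care is required to verify that every terminating configuration corresponds to exactly one of the three stated conditions, with no equality case overlooked.
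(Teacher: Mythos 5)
Your proposal is correct and follows essentially the same route as the paper: the upper bound from Corollary~\ref{upper_bound_corollary}, reduction of tightness to the existence of a coherent pair with $(X_1-p)+(X_2-p)=a(X-p)$, the three geometric arrangements of the two support lines relative to $X_1+X_2=1$, propagation of null sets via Propositions~\ref{prop} and~\ref{induction_prop} to force the ladder support and the integrality conditions, and recovery of the masses from $\P(X_i=t,X=1)=t\,\P(X_i=t)$, with the two-ladder convex combination in the case $1/a\in\N$. No discrepancy with the paper's argument.
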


    A lower bound for $\mathrm{cov}(X_{1}, X_{2})$ and $p=\frac{1}{2}$ was found  in \citep*{arieli2020feasible,cichomski2020maximal}. In both papers, authors rely on a version of the Hilbert space approach 
    to get an upper bound and then use an ad~hoc
    argument based on guessing the optimal distribution to establish tightness of the bound. The Theorem~\ref{th1.1} extends the result to $p \in [\frac{1}{3}, \frac{2}{3}]$ and provides the optimal distribution without guessing.
    
    \begin{corollary} Let $p$ be a real number in $(0, 1)$. If $p$ lies in the interval $[\frac 13,\frac 23]$, then
        $$\min_{(X_{1}, X_{2}) \in \mathcal{C}_p } \mathrm{cov}(X_{1}, X_{2}) = -\frac{1}{8} p(1-p).$$

        If $p$ lies outside the interval $[\frac 13,\frac 23]$, then the strict inequality holds:

         $$\min_{(X_{1}, X_{2}) \in \mathcal{C}_p } \mathrm{cov}(X_{1}, X_{2}) > -\frac{1}{8} p(1-p).$$
    \end{corollary}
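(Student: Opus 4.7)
The plan is to specialize Theorem~\ref{th1.1} to $f(x_1,x_2)=-(x_1-p)(x_2-p)$, whose maximum over $\mathcal{C}_p$ equals $-\min_{(X_1,X_2)\in\mathcal{C}_p}\mathrm{cov}(X_1,X_2)$. First, using the algebraic identity $(x_1+x_2-2p)^2-(x_1-x_2)^2=4(x_1-p)(x_2-p)$, I would rewrite $f$ in the form required by the theorem as $\tfrac14(x_1-x_2)^2-\tfrac14(x_1+x_2-2p)^2$, so that $\alpha=\tfrac14$ and $\beta=-\tfrac14$. Since $\alpha\ge\max(0,2\beta)$, Theorem~\ref{th1.1} yields $\max_\mu\int f\,d\mu\le\tfrac{\alpha^2}{\alpha-\beta}p(1-p)=\tfrac18 p(1-p)$, which rewrites as the uniform lower bound $\min\mathrm{cov}(X_1,X_2)\ge-\tfrac18 p(1-p)$ valid for every $p\in(0,1)$.

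Next I would determine when this bound is attained by substituting the same $\alpha,\beta$ into the three tightness conditions. Computing $\tfrac{\alpha}{\alpha-\beta}=\tfrac12$ and $\tfrac{\alpha-\beta}{\alpha}=2$, Condition~\ref{case2} reduces to $N=2$ together with $\tfrac32 p+\tfrac12\ge 1\ge \tfrac32 p$, i.e., $p\in[\tfrac13,\tfrac23]$. This settles the equality statement on that interval.

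For $p$ outside $[\tfrac13,\tfrac23]$, I would rule out the remaining two tightness conditions. Condition~\ref{case1} becomes $4-3p=N\in\N$ together with $p>\tfrac13$; for $p\in(\tfrac23,1)$ the quantity $4-3p$ lies strictly between $1$ and $2$, hence is never an integer. By the $p\leftrightarrow 1-p$ symmetry, Condition~\ref{case3} likewise excludes $p\in(0,\tfrac13)$. Thus none of the tightness conditions holds outside $[\tfrac13,\tfrac23]$, and Theorem~\ref{th1.1} forces the strict inequality $\min\mathrm{cov}(X_1,X_2)>-\tfrac18 p(1-p)$. There is no real obstacle here; the whole argument is a direct substitution into Theorem~\ref{th1.1}, and the only care required is the interval arithmetic matching the tightness conditions to $[\tfrac13,\tfrac23]$.
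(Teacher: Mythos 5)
Your proof is correct and follows essentially the same route as the paper: rewrite $-(x_1-p)(x_2-p)$ in the quadratic form of Theorem~\ref{th1.1} (you use $\alpha=\tfrac14,\beta=-\tfrac14$ where the paper factors out $\tfrac14$ and uses $\alpha=1,\beta=-1$; the relevant ratios are identical) and read off condition~\ref{case2} as $p\in[\tfrac13,\tfrac23]$. If anything, your verification that conditions~\ref{case1} and~\ref{case3} also fail outside $[\tfrac13,\tfrac23]$ is spelled out more explicitly than in the paper, which only checks the first condition.
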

    
    \begin{proof}
        Rewrite $\mathrm {cov}(X_{1}, X_{2})$ as a quadratic function of $X_{1}, X_{2}$
        $$\mathrm{cov}(X_{1}, X_{2}) = \E \big\{(X_{1} - p)(X_{2}-p)\big\} = -\frac{1}{4} \E\big\{(X_{1}-X_{2})^2-(X_{1}+X_{2}-2p)^2\big\}.$$
        Then we substitute $\alpha = 1, \beta = -1$ in Theorem~\ref{th1}. $\frac{\alpha- \beta}{\alpha} = 2 \in \N$, so we should check the condition $\frac{-\beta}{\alpha - \beta} \le  \frac{\alpha -2\beta}{\alpha - \beta}p  \le 1$. This means that $\frac{1}{2} \le \frac{3}{2} p \le 1$, which is $\frac{1}{3} \le p \le \frac{2}{3}.$
    \end{proof}

    \section{Linear programming approach}

    The Hilbert space approach is only applicable to quadratic functionals, yet even for these cases, it provides a tight bound only for a limited range of parameters. To find a tight bound in other situations, we need to use other techniques. This section discusses how linear programming duality helps us with our problem.
    

    We rely on the basic finite-dimensional duality. The following lemma shows that our infinite-dimensional optimization problem can be approximated with a finite-dimensional optimization problem by reducing to only finite subalgebras. An algebra is called finite if it contains only a finite number of sets. We prove the following lemma in Appendix~\ref{Appendix_discretization_lemma}.
    

    \begin{lemma}
    \label{discretization_lemma}
    Let $X$ be a bounded random variable defined on a probability space $(\Omega, \mathcal{F}, \P)$. For subalgebras $\F_1, \F_2$ of algebra $\F$, we define random variables $X_{i} = \E\left[X|\F_i\right]$ for $i = 1, 2$. 
    Let $\mathfrak{A}$ be a set of all subalgebras of $\F$ and $\mathfrak{B}$ be a set of all finite subalgebras of $\F$. 

    If a function $f:\mathbb{R}^2 \to \mathbb{R}^2$ is Lipschitz
    \footnote{For the sake of definiteness, we consider the regular Euclidean metric in $\mathbb{R}^2$, $\|(x, y)\| = \sqrt{x^2+y^2}$.}, then 
    $$\sup_{\F_1, \F_2 \in \mathfrak{A}}  \E f(X_{1}, X_{2}) = \sup_{\F_1, \F_2 \in \mathfrak{B}} \E f(X_{1}, X_{2}).$$
\end{lemma}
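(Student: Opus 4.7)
The inequality $\sup_{\mathfrak{B}} \le \sup_{\mathfrak{A}}$ is immediate since $\mathfrak{B} \subset \mathfrak{A}$. For the reverse direction, the plan is to show that for any $\F_1, \F_2 \in \mathfrak{A}$ and any $\varepsilon > 0$, one can find finite subalgebras $\F_1', \F_2' \in \mathfrak{B}$ such that the associated conditional expectations $X_i' = \E[X \mid \F_i']$ satisfy $|X_i - X_i'| \le \varepsilon$ almost surely. The Lipschitz hypothesis on $f$ with constant $L$ then forces $\bigl|\E f(X_1, X_2) - \E f(X_1', X_2')\bigr| \le L\sqrt{2}\,\varepsilon$, and letting $\varepsilon \to 0$ yields equality of the two suprema.

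The natural construction proceeds via the range of each $X_i$. Since $|X|\le M$ for some constant $M$, also $|X_i|\le M$, so I would partition $[-M, M]$ into finitely many disjoint half-open intervals $I_1, \dots, I_n$ of length at most $\varepsilon$ and set
\[
  \F_i' := \sigma\bigl(\{X_i \in I_k\} : k = 1, \dots, n\bigr).
\]
Because the generating sets are pairwise disjoint, $\F_i'$ is a finite sub-sigma-algebra of $\F_i$ whose atoms are precisely the (non-null) sets $\{X_i \in I_k\}$. By the tower property applied to $\F_i' \subset \F_i$, we have $X_i' = \E[X_i \mid \F_i']$, so on the atom $\{X_i \in I_k\}$ the variable $X_i'$ takes the constant value $\E[X_i \mid X_i \in I_k]$, which necessarily lies in $I_k$. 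Hence $|X_i - X_i'| \le \varepsilon$ pointwise, completing the pair of finite-subalgebra approximations.

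No substantial obstacle is anticipated: the argument is a routine discretization. The one point worth double-checking is the identification of the atoms of $\F_i'$ with the level sets $\{X_i \in I_k\}$, which follows from the disjointness of the generators, after which the tower property delivers the approximation cleanly. Boundedness of $X$ is the essential hypothesis, since it reduces the problem to partitioning a compact interval into $O(1/\varepsilon)$ pieces; without it, no finite partition of bounded size could control $|X_i - X_i'|$ uniformly.
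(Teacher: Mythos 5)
Your proposal is correct and follows essentially the same route as the paper: partition the (bounded) range of each $X_i$ into intervals of length at most $\delta$, take the finite subalgebra generated by the level sets, observe via the tower property that the new conditional expectation on each atom is the average of $X_i$ over that atom and hence stays in the same interval, and then invoke the Lipschitz bound. The paper's version is just a terser writing of this same discretization argument.
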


    This lemma says that it is enough to find the upper bound of $\E f(X_{1}, X_{2})$ only for atomic distributions of $(X_{1}, X_{2})$ with a finite number of atoms. Let 
    \begin{equation*}
        \begin{split}
            &\P\left(X_{1}, X_{2}\ | X = 1\right) = \sum_{1\le i, j \le n} \alpha_{i, j}   \delta _{\{X_{1} = a_i, X_{2} = a_j\}} \\
            &\P\left(X_{1}, X_{2}\ | X = 0 \right) =  \sum_{1 \le i, j \le n} \beta_{i, j}  \delta _{\{X_{1} = a_i, X_{2} = a_j\}},
        \end{split}
    \end{equation*}
    where $\delta$ is the Dirac measure. Without loss of generality, for every $i$, there is at least one nonzero value among $\alpha_{i, j}$ and $\beta_{i, j}$. Then, by Bayes' theorem, we have:
    \begin{equation*}
    	\begin{split}
    	   & a_i = \P\left(X_{1} = a_i|\ X=1\right) = \frac{\P\left(X_{1} = a_i, X=1\right)}{\P\left(X_{1}=a_i\right)} = \frac{p\sum_{j=1}^n \alpha_{i, j}}{p\sum_{j=1}^n \alpha_{i, j} + (1-p)\sum_{j=1}^n \beta_{i, j}},\\
    	   & a_j = \P\left(X_{2} = a_j|\ X=1\right) = \frac{\P\left( X_{2}= a_j, X=1\right)}{\P\left(X_{2}=a_j\right)} = \frac{p\sum_{i=1}^n \alpha_{i, j}}{p\sum_{i=1}^n \alpha_{i, j} + (1-p)\sum_{i=1}^n \beta_{i, j}}.
    	\end{split}
    \end{equation*}
    
    These equations can be 
    expressed as linear constraints on $\alpha_{i, j}$ and $\beta_{i, j}$.
    \begin{equation}
    	\label{first_constraint_of_primal_LP}
    	\begin{split}
    	   &  (1-a_i) p\sum_{j=1}^n \alpha_{i, j} - a_i (1-p)\sum_{j=1}^n \beta_{i, j} = 0,\\
    	   &  (1-a_j) p\sum_{i=1}^n \alpha_{i, j} - a_j (1-p)\sum_{i=1}^n \beta_{i, j} = 0.
    	\end{split}
    \end{equation}

    Note that constraints~\ref{first_constraint_of_primal_LP} are satisfied even if, for some $i$, all $\alpha_{i,j}$ and $\beta_{i, j}$ are zero for every~$j$.
   The only remaining constraints on $\alpha$ and $\beta$ are as follows:
    \begin{equation}
    	\label{second_constraint_of_primal_LP}
    	\begin{split}
    	   \alpha_{i, j} &\ge 0,\\
    	   \beta_{i, j} &\ge 0,\\
    	    \sum_{1 \le i, j \le n} \alpha_{i, j} &= 1,\\
    	   \sum_{1 \le i, j \le n} \beta_{i, j} &= 1.
    	\end{split}
    \end{equation}
    
    The objective function can also be rewritten as a linear function of $\alpha_{i, j}, \beta_{i, j}$:
    \begin{equation}
        \label{Objective_function_for_primal_LP}
    	\begin{split}
    	   \E[f(X_{1}, X_{2})] = \sum_{1 \le i, j \le n}f(a_i, a_j)(p\alpha_{i, j} + (1-p)\beta_{i, j}).
    	\end{split}
    \end{equation}

  Now, we can solve this linear 
  program numerically and find a distribution at which the objective function is very close to its optimal value. By the strong duality theorem, the optimal value of the objective function in the primal linear program equals the optimal value of the objective function in the dual problem. So, to prove that some value of the objective function in the primal linear program is optimal, it is enough to identify variables in the dual linear program such that the dual objective achieves identical value.

    Let us write down the dual problem. The $2n$ constraints of the form \eqref{first_constraint_of_primal_LP} turn into $2n$ variables~$\mu_i, \nu_i$. Constraints \eqref{first_constraint_of_primal_LP} are equations, so~$\mu_i, \nu_i \in \mathbb{R}$. Constraints of the form \eqref{second_constraint_of_primal_LP} become variables $\gamma, \delta \in \R$. Variables $\alpha_{i, j}, \beta_{i, j}$ turn into constraints.
    
    \begin{equation}
    	\label{Constraints_for_dual_LP}
    	\begin{split}
    	   \mu_i(1-a_i)p + \nu_j(1-a_j)p  + \gamma &\ge pf(a_i, a_j),\\
    	   -\mu_ia_i(1-p) - \nu_ia_j (1-p) + \delta  &\ge (1-p)f(a_i, a_j).
    	\end{split}
    \end{equation}
    
    And the objective function becomes $\gamma + \delta \to \min.$ 

    Note that conditions~\eqref{Constraints_for_dual_LP}  can be rewritten as:
    \begin{equation*}
        \begin{split}
            &\gamma \ge p \cdot \max_{1 \le i, j \le n} \Big\{f(a_i, a_j)-\mu_i(1-a_i) - \nu_j(1-a_j)\Big\},\\
            &\delta \ge (1-p)\max_{1 \le i, j \le n
            } \Big\{f(a_i, a_j)+\mu_ia_i + \nu_ja_j\Big\}.
        \end{split}
    \end{equation*}
    
     So the dual linear program takes the following form.

    \begin{problem}
        \label{discretr_problem_one}
        For a given number $p \in [0, 1]$, a function $f:[0, 1] \to \R$, and a sequence $(a_i)_{i=1}^n$, minimize the expression 
        \begin{equation*}
             p \cdot \max_{1 \le i, j \le n} \Big\{f(a_i, a_j)-\mu_i(1-a_i) - \nu_j(1-a_j)\Big\} + (1-p)\max_{1 \le i, j \le n
            } \Big\{f(a_i, a_j)+\mu_ia_i + \nu_ja_j\Big\}
        \end{equation*}
        over all sequences $(\mu_i)_{i=1}^n, (\nu_i)_{i=1}^n$ of real numbers.
    \end{problem}

    To find  $\displaystyle \sup_{(X_1, X_2) \in \mathcal C_p} \E f(X_1, X_2)$, we want to find the supremum among the minimum values. Consider the following problem:

      \begin{problem}
        \label{discretr_problem_two}
        For a given number $p \in [0, 1]$ and a function $f:[0, 1] \to \R$,  maximize the expression,
        \begin{equation*}
             \min_{(\mu_i)_{i=1}^n, (\nu_i)_{i=1}^n} \Bigg\{p \cdot \max_{1 \le i, j \le n} \Big\{f(a_i, a_j)-\mu_i(1-a_i) - \nu_j(1-a_j)\Big\} + (1-p)\max_{1 \le i, j \le n
            } \Big\{f(a_i, a_j)+\mu_ia_i + \nu_ja_j\Big\}\Bigg\}
        \end{equation*}
        over all numbers $n\in \N$ and sequences $(a_i)_{i=1}^n$ of numbers in $[0, 1]$. We call the supremum obtained from this maximization problem the value of the problem.
    \end{problem}
    
   Thus, combining strong duality theorem and Lemma~\ref{discretization_lemma} we obtain the following lemma:  

    \begin{lemma}
        The value of Problem~\ref{discretr_problem_two} equals $\displaystyle \sup_{(X_1, X_2) \in \mathcal C_p} \E f(X_1, X_2)$.
    \end{lemma}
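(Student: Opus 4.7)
The plan is to assemble the pieces laid out in the paragraphs preceding the statement: for each fixed finite support $(a_i)_{i=1}^n$, reduce the maximization of $\E f(X_1, X_2)$ to a finite-dimensional linear program, apply strong LP duality to that instance, and finally take the supremum over supports and match the result against Lemma~\ref{discretization_lemma}.

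In detail, the first step is to use Lemma~\ref{discretization_lemma} to replace the supremum of $\E f(X_1, X_2)$ over $(X_1, X_2) \in \mathcal{C}_p$ with the supremum over coherent pairs having finitely supported joint distribution. Any such distribution can be embedded in a symmetric grid $\{a_1, \ldots, a_n\}^2$, for instance by taking $(a_i)$ to be the union of the marginal supports of $X_1$ and $X_2$. Then the distribution is parametrized by $\alpha_{ij}, \beta_{ij}$ satisfying the linear constraints \eqref{first_constraint_of_primal_LP} and \eqref{second_constraint_of_primal_LP}, with linear objective \eqref{Objective_function_for_primal_LP}. This is a finite-dimensional LP whose feasible set is nonempty (the Dirac mass at $(p,p)$ is always coherent) and compact (it sits inside a product of simplices), so the primal optimum is attained.

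The dual, as derived in the text, minimizes $\gamma + \delta$ subject to \eqref{Constraints_for_dual_LP}; optimizing $\gamma$ and $\delta$ out explicitly turns the dual objective into the inner expression of Problem~\ref{discretr_problem_one}. By strong LP duality, the primal and dual optima coincide for each fixed $(a_i)_{i=1}^n$. Taking the supremum over $n \in \N$ and $(a_i)_{i=1}^n \subset [0,1]$ on both sides, the dual side becomes the value of Problem~\ref{discretr_problem_two} by definition, while the primal side becomes $\sup_{(X_1, X_2) \in \mathcal{C}_p} \E f(X_1, X_2)$ via the reduction of the first step.

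The main technical point is verifying that strong duality applies uniformly across all supports; this follows from feasibility of the primal, compactness of its feasible set, and boundedness of $f$ on $[0,1]^2$, which together ensure both optima are finite, attained, and equal, with no duality gap. A secondary caveat is that Lemma~\ref{discretization_lemma} assumes $f$ Lipschitz; this hypothesis carries over here but is consistent with the applications that follow in the paper, and for merely continuous $f$ one would approximate by Lipschitz functions and pass to the limit.
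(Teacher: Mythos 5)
Your proposal follows the same route as the paper, which itself gives no separate proof of this lemma beyond the surrounding derivation: discretize via Lemma~\ref{discretization_lemma}, write the finite-support problem as the LP with constraints \eqref{first_constraint_of_primal_LP}--\eqref{second_constraint_of_primal_LP} and objective \eqref{Objective_function_for_primal_LP}, invoke strong duality grid by grid, and take the supremum over grids. Your extra care about when strong duality applies is welcome, but one justification is off: the Dirac mass at $(p,p)$ is coherent, yet it need not be supported on a given grid $\{a_1,\dots,a_n\}^2$, and indeed for grids omitting suitable values the primal LP can be infeasible (e.g.\ a single point $a_1\neq p$ forces $\E[X_1]\neq p$). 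This does not break the lemma --- the dual is always feasible (take $\mu_i=\nu_i=0$ and $\gamma,\delta$ large), so an infeasible primal forces the dual to be unbounded below and both values are $-\infty$, leaving the supremum over grids unchanged --- but the feasibility claim as stated should be corrected or replaced by this case distinction.
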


    We will find $\mu_i$ and $\nu_i$ as functions of $a_i$, so  consider the continuous analogue of the problem~\ref{discretr_problem_two}:

    
    \begin{problem}
    	\label{reformulated_dual_LP}
    	For a given number $p \in [0, 1]$ and function $f:[0, 1] \to \R$, minimize the expression 
    	\begin{equation}
    	\label{reformulated_dual_LP_expression}
    	p\cdot \sup_{x, y \in [0, 1]} \Big\{f(x, y) - g(x)(1-x) - h(y)(1-y)\Big\}  + (1-p)\cdot \sup_{x, y \in [0, 1]} \Big\{f(x, y) + g(x)x + h(y)y\Big\}
    	\end{equation}
    
    	over all functions $g, h: [0, 1] \to \R$. We call the infimum obtained from this minimization problem the value of the problem.
    \end{problem}

    The following lemma states that the values of Problem~\ref{discretr_problem_two} and Problem~\ref{reformulated_dual_LP} coincide. We prove it in Appendix~\ref{Appendix_two_problems}.

    \begin{lemma}
        \label{two_problem_lemma} Let $f$ be a continuous function, then the value of Problem~\ref{discretr_problem_two} equals the value of Problem~\ref{reformulated_dual_LP}
    \end{lemma}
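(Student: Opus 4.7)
\emph{Plan.} The plan is to sandwich the two values between the common quantity $V:=\sup_{(X_{1},X_{2})\in\mathcal{C}_{p}}\E f(X_{1},X_{2})$. Since the preceding discussion already establishes Value of Problem~\ref{discretr_problem_two}$\,=V$, it suffices to prove the two-sided bound between Value of Problem~\ref{reformulated_dual_LP} and Value of Problem~\ref{discretr_problem_two} (equivalently, Value of Problem~\ref{reformulated_dual_LP}$\,=V$). I split into two inequalities.

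\emph{Easy direction: Value of Problem~\ref{discretr_problem_two}$\,\le$ Value of Problem~\ref{reformulated_dual_LP}.} For any $(g,h)$ feasible for Problem~\ref{reformulated_dual_LP} and any finite grid $(a_{i})_{i=1}^{n}$, setting $\mu_{i}:=g(a_{i})$ and $\nu_{j}:=h(a_{j})$ gives a candidate for the inner minimization of Problem~\ref{discretr_problem_two}; because the maximum over grid points is dominated by the supremum over $[0,1]^{2}$, this candidate's value is at most the continuous objective of $(g,h)$. Taking first the infimum over $(\mu_{i},\nu_{j})$ and the supremum over grids, and finally the infimum over $(g,h)$, yields the stated inequality.

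\emph{Hard direction: Value of Problem~\ref{reformulated_dual_LP}$\,\le V$ via minimax.} Restrict to continuous $g,h\in C([0,1])$ (which can only raise the infimum). Using $\sup_{x,y}\Phi=\sup_{\mu\in\mathcal{M}}\int\Phi\,d\mu$ for continuous $\Phi$ on compact $[0,1]^{2}$, where $\mathcal{M}$ denotes Borel probability measures on $[0,1]^{2}$, the continuous objective rewrites as $\sup_{(\mu_{1},\mu_{2})\in\mathcal{M}^{2}}L(g,h,\mu_{1},\mu_{2})$ with the bilinear functional
\[L=p\!\int\!\bigl(f(x,y)-g(x)(1-x)-h(y)(1-y)\bigr)d\mu_{1}+(1-p)\!\int\!\bigl(f(x,y)+g(x)x+h(y)y\bigr)d\mu_{2}.\]
By Banach--Alaoglu, $\mathcal{M}^{2}$ is weak$^{*}$-compact and convex; $L$ is weak$^{*}$-continuous in $(\mu_{1},\mu_{2})$ (continuous integrands) and continuous-linear in $(g,h)\in C^{2}$. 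Sion's minimax theorem applies and gives $\inf_{C^{2}}\sup_{\mathcal{M}^{2}}L=\sup_{\mathcal{M}^{2}}\inf_{C^{2}}L$. The inner infimum over continuous $g$ of $L=(\text{constant-in-}g,h)+\int g\,d\lambda_{1}+\int h\,d\lambda_{2}$, with $\lambda_{1}:=(1-p)x\,d\mu_{2,x}(x)-p(1-x)\,d\mu_{1,x}(x)$ and analogous $\lambda_{2}$, is $-\infty$ unless $\lambda_{1}=\lambda_{2}=0$; when these constraints hold, the value is $p\int f\,d\mu_{1}+(1-p)\int f\,d\mu_{2}$. Rearranging $\lambda_{1}=0$ yields $p\,d\mu_{1,x}(x)=x\,d(p\mu_{1,x}+(1-p)\mu_{2,x})(x)$, which is precisely the Bayes identity $\P(X=1\mid X_{1}=x)=x$ ensuring that $\mu:=p\mu_{1}+(1-p)\mu_{2}$ lies in $\mathcal{C}_{p}$; the same holds for $X_{2}$ via $\lambda_{2}$. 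Hence the minimax value equals $\sup_{\mu\in\mathcal{C}_{p}}\int f\,d\mu=V$, and the (restricted, hence unrestricted) Value of Problem~\ref{reformulated_dual_LP} is at most~$V$.

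\emph{Main obstacle.} The technical heart is Sion's minimax step and its hypotheses: weak$^{*}$-compactness of $\mathcal{M}$ (from Banach--Alaoglu on compact $[0,1]^{2}$) is routine, but weak$^{*}$-continuity of $L$ in $(\mu_{1},\mu_{2})$ requires the integrands to be continuous, which is what motivates --- and justifies --- the restriction to $C^{2}$. The second delicate point is the probabilistic identification: recognizing that the vanishing of the signed measures $\lambda_{1},\lambda_{2}$ arising from the inner infimum is, via Bayes' rule, exactly the coherence condition $p\mu_{1}+(1-p)\mu_{2}\in\mathcal{C}_{p}$. This identification is what converts the abstract minimax identity into the probabilistic supremum~$V$ and closes the argument.
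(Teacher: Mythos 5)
Your proof is correct, but its second half takes a genuinely different route from the paper's. The easy direction (Value of Problem~\ref{discretr_problem_two} $\le$ Value of Problem~\ref{reformulated_dual_LP}, obtained by evaluating $g,h$ at the grid points and noting that a maximum over a finite grid is dominated by the supremum over the square) coincides with the paper's argument. For the converse, the paper stays entirely inside the two optimization problems: it takes a $\delta/4$-dense grid $0=a_1<\dots<a_n=1$, extends the discrete multipliers $(\mu_i),(\nu_i)$ to step functions constant on $[a_i,a_{i+1})$, and uses uniform continuity of $f$ on the compact square to show that the continuous suprema exceed the discrete ones by at most $\varepsilon$; this gives the reverse inequality directly and keeps the lemma self-contained. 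You instead bypass Problem~\ref{discretr_problem_two} and prove Value of Problem~\ref{reformulated_dual_LP} $\le V=\sup_{\mu\in\mathcal{C}_p}\int f\,d\mu$ by an infinite-dimensional minimax: restricting to continuous $g,h$, rewriting the suprema as suprema over probability measures, invoking Sion's theorem on the weak$^{*}$-compact convex set $\mathcal{M}^2$, and identifying the constraints $\lambda_1=\lambda_2=0$ produced by the inner infimum with the coherence (Bayes/martingale) condition, so that $\sup\inf=V$; combined with the unnumbered lemma preceding this one (Value of Problem~\ref{discretr_problem_two}$\,=V$), this closes the loop. Your route uses heavier machinery (Sion, Banach--Alaoglu, Riesz) and depends on the preceding lemma, whereas the paper's argument is elementary and establishes the equality of the two values without reference to their probabilistic meaning; on the other hand, your argument delivers as a by-product a direct, discretization-free proof of strong duality for the continuous dual problem, which the paper only reaches by chaining finite LP duality, Lemma~\ref{discretization_lemma}, and this lemma. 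The facts you assert without full detail --- that $\inf_{g\in C([0,1])}\int g\,d\lambda$ is $-\infty$ unless the signed measure $\lambda$ vanishes, and that one may take $\mathcal{F}_i=\sigma(X_i)$ so that $\lambda_1=\lambda_2=0$ exactly characterizes $p\mu_1+(1-p)\mu_2\in\mathcal{C}_p$ --- are both correct and standard.
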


    As a result, the value of Problem~\ref{reformulated_dual_LP} equals to the $\displaystyle \sup_{(X_1, X_2) \in \mathcal C_p} \E f(X_1, X_2)$.



    Then we introduce two simplifications of this problem which are proven in Appendices~\ref{Appendix_dual_LP_first_simplification} and~\ref{Appendix_dual_LP_second_simplification}.

    \begin{lemma}
        \label{dual_LP_first_simplification}
        If $f$ in Problem~\ref{reformulated_dual_LP} is symmetric ($\forall x, y \in [0, 1]\ f(x, y) = f(y, x)$), then it is sufficient to minimize only over functions $g, h$, such that $g = h$.
    \end{lemma}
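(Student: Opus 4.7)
The plan is to combine the symmetry of $f$ with the convexity of the objective in $(g,h)$ to argue that replacing any admissible pair $(g,h)$ by the symmetrized pair $\bigl(\tfrac{g+h}{2},\tfrac{g+h}{2}\bigr)$ can only decrease (or preserve) the value of~\eqref{reformulated_dual_LP_expression}. Denote the objective by $V(g,h)$, i.e.\ the sum of the two suprema appearing in~\eqref{reformulated_dual_LP_expression}.

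First, I would check that $V(h,g)=V(g,h)$. For the first supremum,
$$\sup_{x,y\in[0,1]}\bigl\{f(x,y)-h(x)(1-x)-g(y)(1-y)\bigr\}
=\sup_{x,y\in[0,1]}\bigl\{f(y,x)-h(x)(1-x)-g(y)(1-y)\bigr\},$$
where the first equality uses the assumed symmetry $f(x,y)=f(y,x)$. Relabeling $x\leftrightarrow y$ in the supremum shows this equals $\sup_{x,y}\{f(x,y)-g(x)(1-x)-h(y)(1-y)\}$. An identical swap-plus-relabel argument handles the second supremum, so $V(h,g)=V(g,h)$.

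Next, I would observe that $V$ is convex in the pair $(g,h)$. Indeed, for each fixed $(x,y)$ the functions
$(g,h)\mapsto f(x,y)-g(x)(1-x)-h(y)(1-y)$ and $(g,h)\mapsto f(x,y)+g(x)x+h(y)y$ are affine in $(g,h)$; taking a supremum of affine functions produces a convex functional, and the sum of two convex functionals is convex. Consequently,
$$V\!\left(\tfrac{g+h}{2},\tfrac{g+h}{2}\right)\le \tfrac{1}{2}V(g,h)+\tfrac{1}{2}V(h,g)=V(g,h).$$
Thus for every admissible pair $(g,h)$ there is a diagonal pair $(\bar g,\bar g)$ with $\bar g=(g+h)/2$ that attains an objective value no larger than $V(g,h)$. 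Hence the infimum over all $(g,h)$ coincides with the infimum restricted to $g=h$, which is the claim of the lemma.

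There is no real obstacle here; the only points to be careful about are that the symmetry swap must be applied to \emph{both} suprema (including the substitution $x\leftrightarrow y$ inside each supremum), and that convexity in $(g,h)$ is inherited from affineness of the integrand before the supremum is taken.
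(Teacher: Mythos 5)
Your proposal is correct and is essentially the paper's own argument in more abstract clothing: the paper averages the two pointwise inequalities $f(x,y)-g(x)(1-x)-h(y)(1-y)\le A$ and $f(y,x)-g(y)(1-y)-h(x)(1-x)\le A$ (using symmetry of $f$) to bound each supremum for the symmetrized pair, which is exactly your ``convexity of a supremum of affine functionals'' plus $V(h,g)=V(g,h)$ applied at the midpoint of $(g,h)$ and $(h,g)$. No gap.
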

    
    \begin{lemma}
        \label{dual_LP_second_simplification}
       Let $p= \frac{1}{2}$, and let $f$ in Problem~\ref{reformulated_dual_LP} be symmetric, satisfying the condition ${f(x, y) = f(1 - x, 1- y)}$ for all ${x, y \in [0, 1]}$. Then, the value of Problem~\ref{reformulated_dual_LP} is equal to the infimum of the expression
        $$ \sup_{x, y \in [0, 1]} \Big\{f(x, y) + s(x)x + s(y)y\Big\}$$
        over all functions $s:[0, 1] \to \R$ such that $s(x) + s(1-x) = 0$.
    \end{lemma}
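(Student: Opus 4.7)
The plan is to combine the reduction $g=h$ from Lemma~\ref{dual_LP_first_simplification} with a convexity-based symmetrization that exploits the extra hypothesis $f(x,y)=f(1-x,1-y)$. After invoking Lemma~\ref{dual_LP_first_simplification}, Problem~\ref{reformulated_dual_LP} becomes the minimization over $g:[0,1]\to\R$ of
$$\tfrac12\sup_{x,y\in[0,1]}\{f(x,y)-g(x)(1-x)-g(y)(1-y)\}+\tfrac12\sup_{x,y\in[0,1]}\{f(x,y)+g(x)x+g(y)y\}.$$

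First I would put the first supremum into the same shape as the second. Using $f(x,y)=f(1-x,1-y)$ inside and making the change of variables $u=1-x$, $v=1-y$ (a bijection of $[0,1]$) gives
$$\sup_{x,y\in[0,1]}\{f(x,y)-g(x)(1-x)-g(y)(1-y)\}=\sup_{u,v\in[0,1]}\{f(u,v)+\tilde g(u)u+\tilde g(v)v\},\qquad \tilde g(x):=-g(1-x).$$
Writing $A(h):=\sup_{x,y\in[0,1]}\{f(x,y)+h(x)x+h(y)y\}$, the objective therefore simplifies to $\tfrac12(A(g)+A(\tilde g))$.

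The key step is that $A$ is a convex functional of $h$: for each fixed $(x,y)$ the quantity under the supremum is affine in $h$, and a pointwise supremum of affine functions is convex. Hence
$$\tfrac12 A(g)+\tfrac12 A(\tilde g)\ge A\!\left(\tfrac{g+\tilde g}{2}\right)=A(s),\qquad s(x):=\tfrac{g(x)-g(1-x)}{2},$$
and this $s$ automatically satisfies $s(x)+s(1-x)=0$. This gives the ``$\ge$'' direction of the claimed equality of the two infima. For the reverse direction, I would observe that whenever $s$ is antisymmetric in the sense $s(x)+s(1-x)=0$, one has $\tilde s=s$, so setting $g=s$ in the original objective produces exactly $A(s)$; thus every value of the simplified problem is realized by the original one.

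I do not expect any substantial obstacle, since the whole argument is a one-step symmetrization via convexity of the sup. The only care needed is verifying that $x\mapsto 1-x$ preserves the feasible set of the supremum, which is immediate since $[0,1]$ is invariant under this map.
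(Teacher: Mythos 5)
Your proof is correct and follows essentially the same route as the paper: after reducing to $g=h$ via Lemma~\ref{dual_LP_first_simplification}, you use the reflection $x\mapsto 1-x$ together with $f(x,y)=f(1-x,1-y)$ and the fact that a sum of suprema dominates the supremum of the sum (your convexity of $A$) to antisymmetrize $g$, and then check that antisymmetric $s$ are fixed by the reflection for the reverse inequality. Your phrasing via convexity of the functional $A$ and the identity $\tilde s=s$ is a slightly cleaner organization of the paper's argument, but the content is identical.
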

    
    \subsection{Tight bound on \texorpdfstring{$\mathrm{cov}(X_{1}, X_{2})$}{}}
    Theorem~\ref{th1} provides us a tight lower bound on $\mathrm{cov}(X_{1}, X_{2})$ for $p$ in $[\frac{1}{3}, \frac{2}{3}]$. This section discusses the following tight lower bound for other $p$.

    \begin{Theorem_2}[\ref{cov_tight_bound}]
    \begin{equation*}
    \min_{(X_{1}, X_{2}) \in \mathcal{C}_p}(\mathrm{cov}(X_{1}, X_{2})) =  
    \begin{cases}
    -\left(\frac{p(1-p)}{1+p}\right)^2 &p\in \left(0, \frac 13\right)\\
    -\frac{1}8p(1-p)&\text{$p\in \left[\frac13, \frac23\right]$}\\
    -\left(\frac{p(1-p)}{2 - p}\right)^2 	&p \in \left(\frac 23, 1\right).
    \end{cases}
    \end{equation*}
    \end{Theorem_2}

    \subsubsection{Optimal distribution}

    Assume $p \in (0, \frac 13]$. 
   First, we construct a distribution that minimizes $\mathrm{cov}(X_{1}, X_{2})$. It can be easily found numerically as a solution to the linear program with constraints~\eqref{first_constraint_of_primal_LP}, \eqref{second_constraint_of_primal_LP}, and objective function~\eqref{Objective_function_for_primal_LP} for $f(x, y) = -(x - p)(y - p)$. We selected $a_i = \frac{i}{n}$ for a large $n$ and solved this linear program for many different $p$. We observed that there exists one index~$i_0$, such that~${\alpha_{i_1, i_1} = 1, \beta_{0, i_1} = \beta_{i_1, 0} = \frac 12}$, and all other variables are approximately $0$. Thus we needed only to find the optimal value of $a_{i_1}$ that is routine. Thus we obtained the following distribution:
    
    \begin{equation*}
    \begin{split}
    \label{distribution_for_cov}
    \P\left(X_{1}, X_{2}\ | X =1\right) &= \delta _{\{X_{1} = \frac{2p}{p+1}, X_{2} = \frac{2p}{p+1}\}}, \\
    \P\left(X_{1}, X_{2}\ | X = 0 \right) &= \frac 12 \delta _{\{X_{1} = 0, X_{2} = \frac {2p}{p+1}\}} + \frac 12 \delta _{\{X_{1} = \frac{2p}{p+1}, X_{2} = 0\}}.
    \end{split}
    \end{equation*}
    
    It is easy to provide sigma algebras $\F_1, \F_2$ that generate this distribution: consider \\ sets~${\Theta = \{0, 1\}}$, $S_1 = S_2 = \{0, \frac{2p}{p+1}\}$, and the following distribution on the set $\Theta \times S_1 \times S_2$:

    \begin{equation*}
    \begin{split}
    &\P\left(\theta = 0, s_1 = 0, s_2 = \frac{2p}{p+1}\right) = \frac 12 (1-p),\\
    &\P\left(\theta = 0, s_1 = \frac{2p}{p+1}, s_2 = 0\right) = \frac 12 (1-p),\\
    &\P\left(\theta = 1, s_1 = \frac{2p}{p+1}, s_2 = \frac{2p}{p+1}\right) = p.
    \end{split}
    \end{equation*}

    Then let~$\F_i$ be a sigma-algebra of all subsets of $S_i$. The covariance can be calculated directly:
    
    \begin{gather*}
    \mathrm{cov}(X_{1}, X_{2}) = p\left(\frac{2p}{p+1} - p\right)^2 + (1-p)\left(\frac{2p}{p+1} - p\right)(0 - p) = \frac{p(p-p^2)}{p+1}\left(\frac{2p}{p+1} - 1)\right) = \\ 
    -\left(\frac{p(1-p)}{p+1}\right)^2.
    \end{gather*}

    \subsubsection{Lower bound}
        \label{cov_lower_bound_section}
       Further, we will prove that \( -\left(\frac{p(1-p)}{p+1}\right)^2\)  is a lower bound. As described above, we want to prove that the value of Problem~\ref{reformulated_dual_LP} is no more than $-\left(\frac{p(1-p)}{p+1}\right)^2$. The function $f(x, y) = -(x - p)(y - p)$ is symmetric, so, by Lemma~\ref{dual_LP_second_simplification}, we want to find a function $g$ such that
    \begin{equation*}
    \begin{split}
    &p\cdot \sup_{x, y \in [0, 1]} \{-(x-p)(y-p) -g(x)(1-x)-g(y)(1-y)\} + \\
    & (1-p)\cdot \sup_{x, y \in [0, 1]} \{-(x-p)(y-p) + g(x)x+g(y)y\} \le -\left(\frac{p(1-p)}{p+1}\right)^2.
    \end{split}
    \end{equation*}
    
    For every $p \in (0, \frac 13]$, this function can be found numerically as a solution to the following linear program:
    
    \begin{gather}
        \nonumber g(x), \gamma, \delta \in \R \\
         \label{cov_inequation1}\gamma \ge -(x - p)(y- p) - g(x)(1-x) - g(y)(1-y)\\
         \label{cov_inequation2}\delta \ge -(x-p)(y-p) +g(x)x+g(y)y\\
        \nonumber p\gamma + (1-p)\delta \to \min.
    \end{gather}

    We guess a solution $g$ and then verify our guess in Appendix~\ref{appendix_cov_lower_bound_section}.  The intuition is as follows. Let us assume that the first inequality turns into an equation for~${x \ge x_0, y = 0}$ for some $x_0$. Therefore $g(x) = \frac{\delta + p^2}{x} - p$ for $x \ge x_0$. Then, from the second inequality~$${\gamma \ge 2\delta + p^2+2p - 3(\delta + p^2)^\frac 23},$$ so we let $\gamma = 2\delta + p^2+2p - 3(\delta + p^2)^\frac 23$. The simplest way to define function $g(x)$ for $x \le x_0$ is to define it as a linear function. To satisfy \eqref{cov_inequation2}, it should be a function $g(x) = x_0 - p - \frac x2$. then for the function~$g$ to be continuous we let $x_0 = \sqrt{2(\delta + p^2)}$. Then the problem turns into the problem of minimizing $p\gamma + (1-p)\delta$ that is a function of one variable $\delta$. The minimum reaches for $\delta = \left(\frac{2p}{p+1}\right)^3 -p^2$. Eventually, $p\gamma + (1-p)\delta = -\left(\frac{p(1-p)}{p+1}\right)^2$. Proof that such function~$g$ satisfies the inequalities~\eqref{cov_inequation1},\eqref{cov_inequation2} is a technical routine.

    

    To obtain the lower bound for $p \in [\frac 23, 1)$, we notice that ${\mathrm{cov} (X_{1}, X_{2}) = \mathrm{cov} (1 - X_{1}, 1 - X_{2})}$, and $1 - X_{i}$ are conditional expectations of the random variable  $1- X$ with mean~${1 - p \in (0, \frac 13]}$. Therefore, for $p \in [\frac 23, 1)$, the lower bound on $\mathrm{cov} (X_{1}, X_{2})$ is  $-\left(\frac{p(1-p)}{2 - p}\right)^2$.

    \subsection{Tight bound on \texorpdfstring{$\E |X_{1} - X_{2}|^\alpha$}{}}

     The problem of finding a tight upper bound for $\E\{|X_{1} - X_{2}|^\alpha\}$ for $\alpha > 0$ was initially stated in \citep{burdzy2019bounds}, and it was solved there for $\alpha = 1$ using the bound for $\E \max(X_{1}, X_{2})$. Then, it was independently solved in \citep*{arieli2020feasible} and \citep{cichomski2020maximal} for $\alpha \in (0, 2]$ using the Hilbert space approach. Lately, it was solved in \citep*
     {arieli2023persuasion} for $\alpha \in (0, \alpha_0]$, where $\alpha_0 = 2.25751\dots$ using linear programming duality. In these cases, the upper  bound is~$2^{-\alpha}$, but, as stated in \citep*{cichomski2023coherent}, the asymptotic for $\alpha \to \infty$ differs a lot. This section establishes a tight upper  bound for all $\alpha \in [0, +\infty)$.

     \begin{Theorem_2}[\ref{difference_tigh_bound}] 
      Let $f(x_1, x_2) = |x_1-x_2|^\alpha$ where $\alpha > 0$ and $p=1/2$. Then
       \begin{equation*}
          \max_{\mu  \in \mathcal{C}_\frac 12} \int f(x_1, x_2) d\mu  =
          \begin{cases}
            2^{-\alpha} & \text{ for } \alpha < \alpha_0\\
          
          2 \cdot \left(\frac{3\alpha + 1 - \sqrt{\alpha^2 +6\alpha + 1}}{2\alpha}\right)^\alpha \cdot  \frac{-(\alpha + 1) + \sqrt{\alpha^2 +6\alpha + 1}}{ \alpha - 1 + \sqrt{\alpha^2 +6\alpha + 1}} & \text{ for } \alpha \ge \alpha_0,
          \end{cases}
       \end{equation*}
       where $\alpha_0 = 2.25751\dots$ is the point, at which the function $ 2 \cdot \left(\frac{3\alpha + 1 - \sqrt{\alpha^2 +6\alpha + 1}}{2\alpha}\right)^\alpha \cdot  \frac{-(\alpha + 1) + \sqrt{\alpha^2 +6\alpha + 1}}{ \alpha - 1 + \sqrt{\alpha^2 +6\alpha + 1}}$ equals  $2^{-\alpha}$ .
    \end{Theorem_2}
    
    \subsubsection{Optimal distribution}
    
    Fix $p = \frac 12$. There are two candidates for optimal distribution. 
    
    The first candidate is a trivial distribution $X_{1} = X, X_{2} = \frac 12$. In this case ${\E\big[|X_{1} - X_{2}|^\alpha\big] = 2^{-\alpha}}$. 
    This distribution is known to be optimal for $\alpha \le \alpha_0$.

    To guess the optimal distribution for big $\alpha$ we rely on numerical simulations. We guessed it via solving the primal linear program numerically for various $\alpha$ and found that the optimal distribution of $(X, X_{1}, X_{2})$ usually concentrates on six points $(1, a, 1), (1, 1, a), (1, 1 - a, 1 - a), (0, 0, 1-a), (0, 1 - a, 0), (0, a, a)$ for some $a$. Random variables $X_{1}, X_{2}$ are conditional expectations of~$X$, so, by Bayes' theorem, this distribution should be as follows 
    
    \begin{equation}
        \label{distribution_for_abs_power}
    	\begin{split}
    		  &\P\left(X = 1, X_{1} = a, X_{2} = 1\right) = \P\left(X = 1, X_{1} = 1, X_{2} = a\right)  = \frac{a}{2(a + 1)} \\
    		&\P\left(X = 1, X_{1} = 1 - a, X_{2} = 1 - a\right) = \frac{1- a}{2(a + 1)} \\
    		&\P\left(X = 0, X_{1} = 0, X_{2} = 1 - a\right) = \P\left(X = 0, X_{1} = 1 - a, X_{2} = 0\right) = \frac{a}{2(a + 1)} \\
    		&\P\left(X = 0, X_{1} = a, X_{2} = a\right) = \frac{1- a}{2(a + 1)}.
    	\end{split}
    \end{equation}
    
    It is easy to check that $X_{i}$ defined as in \eqref{distribution_for_abs_power} are conditional expectations of $X$ for all~${a \in [0, 1]}$. 
    For this distribution, $\E |X_{1} - X_{2}| =  (1 - a)^\alpha \cdot \frac{2a}{a + 1}$, so it is only left to maximize this expression over $a$. The optimal value is $a =  \frac{-(\alpha + 1) + \sqrt{\alpha^2 +6\alpha + 1}}{2\alpha}$, and for this $a$ we have 
    \begin{equation}
        \label{optimal_value_for_abs_power}
        \E\big[|X_{1} - X_{2}|^\alpha\big] = 2 \cdot \left(\frac{3\alpha + 1 - \sqrt{\alpha^2 +6\alpha + 1}}{2\alpha}\right)^\alpha \cdot  \frac{-(\alpha + 1) + \sqrt{\alpha^2 +6\alpha + 1}}{ \alpha - 1 + \sqrt{\alpha^2 +6\alpha + 1}}.
     \end{equation}
        Note that $ 2 \cdot \left(\frac{3\alpha + 1 - \sqrt{\alpha^2 +6\alpha + 1}}{2\alpha}\right)^\alpha \cdot  \frac{-(\alpha + 1) + \sqrt{\alpha^2 +6\alpha + 1}}{ \alpha - 1 + \sqrt{\alpha^2 +6\alpha + 1}} \ge  2^{-\alpha}$ exactly for $\alpha \ge \alpha_0$, so we claim that this distribution is optimal for big $\alpha \ge \alpha_0$.

    \subsubsection{Upper bound}

    We check that one of the two distributions described in the previous subsection gives the optimal value to $\E\big[|X_{1} - X_{2}|^\alpha\big]$, depending on the value of $\alpha$.

    \label{section_difference_lower_bound}

     Let $\mathrm{opt}(\alpha)$ be defined as follows: 
     \begin{equation*}
     \mathrm{opt}(\alpha) = 
       \begin{cases}
        2^{-\alpha} & \text{ for } \alpha < \alpha_0,\\
        2 \cdot \left(\frac{3\alpha + 1 - \sqrt{\alpha^2 +6\alpha + 1}}{2\alpha}\right)^\alpha \cdot  \frac{-(\alpha + 1) + \sqrt{\alpha^2 +6\alpha + 1}}{ \alpha - 1 + \sqrt{\alpha^2 +6\alpha + 1}} & \text{ for } \alpha \ge \alpha_0.
       \end{cases}
     \end{equation*} 
     By Lemma~\ref{dual_LP_second_simplification}, to prove  an upper bound, it is sufficient to find a function $s:[0, 1] \to \R$ such that $s(x) + s(1 - x) = 0$ and $\forall x, y \in [0, 1]$

    \begin{equation}
        \label{lower_bound_on_abs_power_eq1}
        |x-y|^\alpha+s(x)x+s(y)y \le \text{opt}(\alpha).
    \end{equation}
    
    We rely on the heuristic that inequality \eqref{lower_bound_on_abs_power_eq1} turns into equality for $x = 0, y \ge y_0$. This means that for $y \ge y_0$, $$y^\alpha + s(y)y = \text{opt}(\alpha),$$ so $$s(y) = \frac{\text{opt}(\alpha) - y^\alpha}{y}.$$ We are looking for a function~$s(x)$ under the condition $s(x) + s(1 - x) = 0$, so for $x \le 1 - y_0$, we have $$s(x) = \frac{(1-x)^\alpha -\text{opt}(\alpha)}{1 - x}.$$ Then for $y \ge y_0 \ge 1 - y_0 \ge x$ \eqref{lower_bound_on_abs_power_eq1} turns into
    
    \begin{equation}
        \label{lower_bound_on_abs_power_eq2}
        (y - x)^\alpha + x\frac{\text{opt}(\alpha) - (1 - x)^\alpha}{1 - x} - y^\alpha \le 0.
    \end{equation}
    
    For $x = 0$, the LHS equals $0$, so its derivative with respect to $x$ should be non-positive at the point $x = 0$, thus
    
    \begin{equation*}
        -\alpha y^{\alpha - 1} + 1 - \text{opt}(\alpha) \le 0.
    \end{equation*}
    
    The function $-\alpha y^{\alpha - 1} + 1 - \text{opt}(\alpha)$ monotonically decreases for $y$, so we obtain a lower bound 
    \[y_0 \ge \left(\frac{1 - \text{opt}(\alpha)}{\alpha}\right)^{\frac 1{\alpha - 1}}\] 
    and we let 
    \[y_0 = \max\left(\left(\frac{1 - \text{opt}(\alpha)}{\alpha}\right)^{\frac 1{\alpha - 1}}, \frac 12\right).\]
    
    For $x, y \le 1 - y_0$, we should check the inequality
    
    \begin{equation*}
        (y - x)^\alpha +x \cdot \frac{(1-x)^\alpha - \text{opt}(\alpha)}{1 - x} + y \cdot \frac{(1-y)^\alpha - \text{opt}(\alpha)}{1 - y}  \le \text{opt}(\alpha).
    \end{equation*}
    
    For $x = y$, it turns into 
    \begin{equation}
        \label{lower_bound_on_abs_power_eq3}
        \frac{2x \cdot (1-x)^{\alpha}} {1 + x} \le  \text{opt}(\alpha).
    \end{equation} 
    
    The function $\frac{2x \cdot (1-x)^{\alpha}} {1 + x}$ has a maximum for $x \in [0, 1]$ at the point $x_0$ such that 
    
    \begin{equation*}
       0 = \left(\frac{2x \cdot (1-x)^{\alpha}} {1 + x}\right)'\Bigg|_{x = x_0} = -\frac{2x_0(1-x_0)^{\alpha - 1}(\alpha x_0^2 +\alpha x_0 + x_0 - 1)}{(x+1)^2},
    \end{equation*}
    
    so $\alpha x_0^2 +(\alpha + 1) x_0  - 1 = 0$. Solving the quadratic equation, we get $x_0 = \frac{-(\alpha + 1) + \sqrt{\alpha^2 +6\alpha + 1}}{2\alpha}$. Then \eqref{lower_bound_on_abs_power_eq3} turns into
    
    \begin{equation*}
        2 \cdot \left(\frac{3\alpha + 1 - \sqrt{\alpha^2 +6\alpha + 1}}{2\alpha}\right)^\alpha \cdot  \frac{-(\alpha + 1) + \sqrt{\alpha^2 +6\alpha + 1}}{ \alpha - 1 + \sqrt{\alpha^2 +6\alpha + 1}} \le \text{opt}(\alpha),
    \end{equation*}

    The only difference between cases $\alpha < \alpha_0$ and $\alpha \ge \alpha_0$ is that for the case $\alpha \ge \alpha_0$ the inequality~\eqref{lower_bound_on_abs_power_eq3} turns into equality for $x = y = x_0$, while for $\alpha < \alpha_0$ it remains an inequality.
    
    All we have to do is extend the function $s(x)$ to $x \in (1-y_0, y_0)$, and we set it equal to $0$ there. Thus we obtain the function 
    
    \begin{equation*}
     s(x) = 
        \begin{cases}
            \frac{(1-x)^\alpha - \text{opt}(\alpha)}{1 - x} & x  \le \min( 1 - (1 - \text{opt}(\alpha))^{\alpha - 1}, \frac 12), \\
            \frac{\text{opt}(\alpha) - x^\alpha}{x} & x  \ge \max( (1 - \text{opt}(\alpha))^{\alpha - 1}, \frac 12), \\
            0 & \text{ otherwise.}
        \end{cases}
    \end{equation*}

    Proving that this function $s(x)$ satisfies the inequality \eqref{lower_bound_on_abs_power_eq1} is routine. We checked it via a Mathematica code, presented in Appendix~\ref{code}.

     \subsection{Asymptotic behavior of  \texorpdfstring{$\max_{\mu \in  \mathcal{C}_{\frac 12}}\int |x_1 - x_2|^\alpha d\mu$ as $\alpha \to \infty$}{}.}

    The asymptotic behavior of  $\max_{\mu \in  \mathcal{C}_{\frac 12}}\int |x_1 - x_2|^\alpha d\mu$ was found by \cite*{cichomski2023coherent} as $\frac 1\alpha \cdot \frac{2}{e}$ for large  $\alpha$. Theorem~\ref{difference_tigh_bound} allows us to check this result by direct computation.

    \begin{lemma}{\cite*{cichomski2023coherent}}
    \label{asymp}
        The following identity holds:
        \begin{equation*}
            \lim_{\alpha \to \infty} \alpha \cdot \Big(\max_{\mu \in  \mathcal{C}_{\frac 12}}\int |x_1 - x_2|^\alpha d\mu \Big) = \frac 2e.
        \end{equation*}

        \begin{gather*}
    \end{gather*}

    \end{lemma}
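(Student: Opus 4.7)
My plan is to invoke Theorem~\ref{difference_tigh_bound} and evaluate the resulting closed-form expression asymptotically. Since for all sufficiently large $\alpha$ we have $\alpha \ge \alpha_0$, the theorem gives
$$\max_{\mu \in \mathcal{C}_{1/2}} \int |x_1 - x_2|^\alpha d\mu = 2 \cdot \left(\frac{3\alpha + 1 - \sqrt{\alpha^2 +6\alpha + 1}}{2\alpha}\right)^\alpha \cdot  \frac{-(\alpha + 1) + \sqrt{\alpha^2 +6\alpha + 1}}{ \alpha - 1 + \sqrt{\alpha^2 +6\alpha + 1}},$$
so the task reduces to computing the limit of $\alpha$ times this explicit expression. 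The first step is to expand the square root: using $\sqrt{1+u} = 1 + u/2 - u^2/8 + O(u^3)$ with $u = 6/\alpha + 1/\alpha^2$, one obtains $\sqrt{\alpha^2+6\alpha+1} = \alpha + 3 - 4/\alpha + O(1/\alpha^2)$.

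Next, I would split the expression into three independent factors, analyze each separately, and combine them. For the rational factor, substituting the expansion yields $-(\alpha+1)+\sqrt{\alpha^2+6\alpha+1} = 2 + O(1/\alpha)$ in the numerator and $\alpha - 1 + \sqrt{\alpha^2+6\alpha+1} = 2\alpha + 2 + O(1/\alpha)$ in the denominator, so the rational factor equals $\frac{1}{\alpha}(1 + o(1))$. For the exponential factor, $\frac{3\alpha+1-\sqrt{\alpha^2+6\alpha+1}}{2\alpha} = 1 - \frac{1}{\alpha} + O(1/\alpha^2)$, and taking logarithms one finds
$$\alpha \log\!\left(1 - \frac{1}{\alpha} + O(1/\alpha^2)\right) = \alpha\left(-\frac{1}{\alpha} + O(1/\alpha^2)\right) = -1 + o(1),$$
so the factor tends to $e^{-1}$.

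Combining the three factors, the limit equals $\alpha \cdot 2 \cdot e^{-1} \cdot \frac{1}{\alpha}(1+o(1)) \to \frac{2}{e}$, as claimed. The only step that requires care is the expansion inside the exponent, since the $\alpha$-th power amplifies errors of order $1/\alpha^2$ into $O(1/\alpha)$; one must verify that the $O(1/\alpha^2)$ remainder in $1 - 1/\alpha + O(1/\alpha^2)$ is genuinely of that order (not $O(1/\alpha)$), which follows from the explicit second-order term in the square-root expansion. Beyond this bookkeeping, the argument is a direct computation with no conceptual obstacle.
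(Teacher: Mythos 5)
Your proposal is correct and follows essentially the same route as the paper: invoke Theorem~\ref{difference_tigh_bound} for large $\alpha$ and asymptotically evaluate the closed-form expression, with the power factor tending to $e^{-1}$ and the rational factor behaving like $1/\alpha$. The only cosmetic difference is that you expand $\sqrt{\alpha^2+6\alpha+1}$ by Taylor series while the paper rationalizes by the conjugate, and your care about the $O(1/\alpha^2)$ remainder inside the $\alpha$-th power is exactly the right point to check.
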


    \begin{proof}
        By Theorem~\ref{difference_tigh_bound}, for $\alpha > 2.25751\dots$ we have 
        \begin{equation}
        \label{limit1}
             \max_{\mu \in  \mathcal{C}_{\frac 12}}\int |x_1 - x_2|^\alpha d\mu = 2 \cdot \left(\frac{3\alpha + 1 - \sqrt{\alpha^2 +6\alpha + 1}}{2\alpha}\right)^\alpha \cdot  \frac{-(\alpha + 1) + \sqrt{\alpha^2 +6\alpha + 1}}{ \alpha - 1 + \sqrt{\alpha^2 +6\alpha + 1}}.
        \end{equation}

        To begin, we compute the limit of the second multiplier of the right side of~\eqref{limit1}.  Note that 
        \begin{equation*}
        \begin{split}
            &\frac{3\alpha + 1 - \sqrt{\alpha^2 +6\alpha + 1}}{2\alpha} = 1 + \frac{3\alpha + 1 - \sqrt{\alpha^2 +6\alpha + 1}}{2\alpha} = \\&
            1 + \frac{-4\alpha}{2\alpha \cdot (\alpha + 1 + \sqrt{\alpha^2 +6\alpha + 1})} = 
            1 - \frac{2}{\alpha + 1 + \sqrt{\alpha^2 +6\alpha + 1}}.
        \end{split}
        \end{equation*}
    Where  the expression $\frac{2}{\alpha + 1 + \sqrt{\alpha^2 +6\alpha + 1}}$ tends to $0$, as $\alpha$ tends to $\infty$. So 

    \begin{equation}
    \label{limit2}
        \begin{split}
           \lim_{\alpha \to \infty} \left(\frac{3\alpha + 1 - \sqrt{\alpha^2 +6\alpha + 1}}{2\alpha}\right)^\alpha &= \lim_{\alpha \to \infty} \left(1 - \frac{2}{\alpha + 1 + \sqrt{\alpha^2 +6\alpha + 1}}\right)^{\frac{\alpha + 1 + \sqrt{\alpha^2 +6\alpha + 1}}{2} \cdot \frac{2\alpha}{{\alpha + 1 + \sqrt{\alpha^2 +6\alpha + 1}}}} = \\ & e^{-\lim_{\alpha \to \infty} \frac{2\alpha}{{\alpha + 1 + \sqrt{\alpha^2 +6\alpha + 1}}}} = \frac 1e. 
        \end{split}
    \end{equation}

     Then we compute the limit of the last multiplier of the right side of~\eqref{limit1} multiplied by $\alpha$

     \begin{equation}
     \label{limit3}
          \lim_{\alpha \to \infty} \alpha \cdot \frac{-(\alpha + 1) + \sqrt{\alpha^2 +6\alpha + 1}}{ \alpha - 1 + \sqrt{\alpha^2 +6\alpha + 1}} = \lim_{\alpha \to \infty} \frac{\alpha \cdot 4\alpha}{(\alpha - 1 + \sqrt{\alpha^2 +6\alpha + 1)} \cdot (\alpha + 1 + \sqrt{\alpha^2 +6\alpha + 1})} = 1.
     \end{equation}

    Combining the equality~\eqref{limit1} with \eqref{limit2} and~\eqref{limit3} we obtain

    \begin{equation*}
        \lim_{\alpha \to \infty} \alpha \cdot \Big(\max_{\mu \in  \mathcal{C}_{\frac 12}}\int |x_1 - x_2|^\alpha d\mu \Big) = 2 \cdot \frac 1e \cdot 1 = \frac 2e.
    \end{equation*}
    
    \end{proof}

     \appendix
    \section*{Appendix}
    \addcontentsline{toc}{section}{Appendices}
    \renewcommand{\thesubsection}{\Alph{subsection}}

    \subsection{Proof of Lemma~\ref{reduce_to_R}}

    \label{appendix_reduce_to_R}
    
    \begin{proof}
        Let  $S \subset \mathcal H$ be a sphere with diameter $X$. 
        We established that 

         \begin{multline*}
                 \max_{(X_{1}, X_{2}) \in \mathcal C_p} \E\left[ \alpha (X_{1} - X_{2})^2 +  \beta(X_{1} + X_{2} -2p)^2 \right]  \le \\ \sup_{\hat X_{1}, \hat X_{2} \in S} \left( \alpha \|\hat X_{1} - \hat X_{2}\|^2 + \beta \|\hat X_{1} + \hat X_{2}\|^2 \right),
            \end{multline*}
        so we only need to prove that 
        \begin{equation*}
            \sup_{\hat X_{1}, \hat X_{2} \in S} \left( \alpha \|\hat X_{1} - \hat X_{2}\|^2 + \beta \|\hat X_{1} + \hat X_{2}\|^2 \right)  \le \max_{x_1, x_2 \in S_2} \left(\alpha \|x_1 - x_2\|^2 + \beta \| x_1 +  x_2\|^2 \right).
        \end{equation*}
        
        Consider a linear space $L = \mathrm {Lin}(\hat X, \hat X_{1}, \hat X_{2}) \subset \mathcal H$ and $S' = S \cap L$. $S'$ is a sphere of dimension no more than $3$, so there exists an isometry $\varphi: L \to \R^3$ that converts the sphere $S'$ to the sphere~$S_2$ or its subset. Let $x_1 = \varphi(\hat X_{1})$ and $x_2 = \varphi(\hat X_{2})$. We obtain that $$\alpha \|\hat X_{1} - \hat X_{2}\|^2 + \beta \|\hat X_{1} + \hat X_{2}\|^2 = \alpha\|x_1 - x_2\|^2 + \beta \|x_1 + x_2\|^2.$$ and so
        $$\sup_{\hat X_{1}, \hat X_{2} \in S} \left( \alpha \|\hat X_{1} - \hat X_{2}\|^2 + \beta \|\hat X_{1} + \hat X_{2}\|^2 \right)  \le \sup_{x_1, x_2 \in S_2} \left(\alpha \|x_1 - x_2\|^2 + \beta \| x_1 +  x_2\|^2 \right).$$
        
        It is only left to notice that $S_2$ is compact, and the function $\alpha \|x_1 - x_2\|^2 + \beta \| x_1 +  x_2\|^2 $ is continuous, so the maximum is achieved on some pair of points $x_1, x_2$.
    \end{proof}

    \subsection{Proof of Lemma~\ref{sphereoptlemma}}

    \label{Appendix_sphereoptlemma}
    
    \begin{proof}
        Let the maximal value of the left side be reached on vectors $x_1, x_2$. Firstly, we consider some partial cases.
        
        \begin{itemize}
            \item Case $x_1 = x_2$. Then  $\alpha \|x_1 - x_2\|^2 + \beta \|x_1 + x_2\|^2 = \beta \|2x_1\|^2 \le \|w\|^2 \max(0, \alpha + \beta ,4\beta)$.
            \item Case $x_1 + x_2 = w$. Then  $\alpha \|x_1 - x_2\|^2 + \beta \|x_1 + x_2\|^2 = \alpha \|x_1 - x_2\|^2 + \beta \|w\|^2 \le {\|w\|^2 \cdot \max( \alpha + \beta, \beta)} \le  \|w\|^2 \cdot \max(0, \alpha + \beta, \beta)$.
            \item Case $\alpha = 0$. Then $\alpha \|x_1 - x_2\|^2 + \beta \|x_1 + x_2\|^2  =  \beta \|x_1 + x_2\|^2  \le \|w\|^2 \max(0, \alpha + \beta, 4\beta)$.
            \item Case $\beta = 0$. Then  $\alpha \|x_1 - x_2\|^2 + \beta \|x_1 + x_2\|^2  =  \alpha \|x_1 - x_2\|^2  \le \|w\|^2 \max(0, \alpha + \beta, 4\beta)$.
        \end{itemize}
        
        So, the values $0, 4\beta\|w\|, (\alpha + \beta)\|w\|$ are achieved without any additional constraints. Let's consider the general case. The point $x$ lies on sphere $S_2$ if and only if $\|x-\frac{w}{2}\|  = \|\frac{w}{2}\|$. By the Lagrange multiplier theorem, there exist two real numbers $\lambda_1, \lambda_2$, such that
        \begin{gather*}
            \nabla_{x_1, x_2} \left(\alpha \|x_1 - x_2\|^2 + \beta \|x_1 + x_2\|^2 + \lambda_1 \left\|x_1 - \frac{w}{2} \right\|^2 + \lambda_2 \left\|x_2 - \frac{w}{2}\right\|^2\right) = 0.
        \end{gather*}
        
        Differentiating with respect to $x_1, x_2$, we obtain
        
        \begin{equation}
            \label{eq1}
            \begin{cases}
                \alpha (x_1 - x_2) + \beta (x_1 + x_2)  +\lambda_1 (x_1 - \frac{w}{2}) = 0 \\ 
                \alpha (x_2 - x_1) + \beta (x_1 + x_2)  +\lambda_2 (x_2 - \frac{w}{2}) = 0.
            \end{cases}
        \end{equation}
        
        Subtracting the first from the second, we have
        
        \begin{equation}
            \label{eq2}
            (2\alpha + \lambda_1)(x_1 - x_2) = (\lambda_1 - \lambda_2)\left(\frac{w}{2} - x_2\right).
        \end{equation}
        
        If both parts are not equal to $0$, then $(x_1 - x_2) \parallel \left(\frac{w}{2} - x_2\right)$, which means that ${\left(\frac{w}{2} - x_1\right) \parallel \left(\frac{w}{2} - x_2\right)}$, therefore endpoints of vectors $x_1, x_2, \frac{w}{2}$ lie on the same line. Then $x_1 + x_2 = w$ or~${x_1 = x_2}$. Both these cases we have already covered. Hence both parts of \eqref{eq2} equal $0$. This means that~$2\alpha + \lambda_1 = 0$. Analogously, $2\alpha + \lambda_2 = 0$. Then the equation \eqref{eq1} turns into 
        \begin{equation*}
            \alpha (x_1 - x_2) + \beta (x_1 + x_2)  - 2\alpha\left(x_1 - \frac{w}{2}\right) = 0.
        \end{equation*}
        Thus we have 
        $$(\beta - \alpha)(x_1 + x_2) = -\alpha w.$$ 
        Case $\alpha = 0$ we have already covered, so the right part does not equal $0$ and we can divide the equation by $\beta - \alpha$ and obtain
        
        \begin{equation}
            \label{eq3}
            x_1 + x_2 = \frac{\alpha}{\alpha - \beta}w.
        \end{equation}
        
        Such $x_1, x_2$ exist if and only if $0 \le \frac{\alpha}{\alpha - \beta} \le 2$.
        
        Now we calculate the value of  $\alpha \|x_1 - x_2\|^2 + \beta \|x_1 + x_2\|^2$. Vector $x_i$ lies on $S$, so ${\langle x_i - w, x_i \rangle = 0}$. Then multiplying \eqref{eq3} by $w$, we obtain
        $$\|x_1\|^2 + \|x_2\|^2= \langle x_1, w\rangle +\langle x_2, w \rangle =  \frac{\alpha}{\alpha - \beta}\|w\|^2.$$
        From the other side, squaring \eqref{eq3}, we obtain 
        $$\|x_1\|^2 + \|x_2\|^2 + 2\langle x_1, x_2 \rangle =  \left(\frac{\alpha}{\alpha - \beta}\right)^2 \|w\|^2.$$
        Using these two equalities we get 
        $$\|x_1 - x_2\|^2 = \left(2 \cdot \frac{\alpha}{\alpha - \beta} -  \left(\frac{\alpha}{\alpha - \beta}\right)^2 \right) \|w\|^2 =   \frac{\alpha \cdot (\alpha - 2\beta)}{(\alpha - \beta)^2}\cdot \|w\|^2.$$ 
        And finally, $$\alpha \|x_1 - x_2\|^2 + \beta \|x_1 + x_2\|^2 = \|w\|^2 \cdot \left(\frac{\alpha^2 \cdot (\alpha - 2\beta)}{(\alpha - \beta)^2}  +\beta \left(\frac{\alpha}{\alpha - \beta}\right)^2\right) =   \frac{\alpha^2}{\alpha - \beta}\cdot\|w\|^2 .$$
        
        To sum up, we find all possible extremums: $0, 4\beta\|w\|^2, (\alpha + \beta)\|w\|^2, \frac{\alpha^2}{\alpha - \beta}\cdot \|w\|^2$ and only we have to do is find all the situations, where the last value is greater than others. 

        \begin{itemize}
            \item For the value $0$ we have: $\frac{\alpha^2}{\alpha - \beta} \ge 0 \Longleftrightarrow \alpha \ge \beta$.
            
            \item If $\alpha > \beta$, for the value $(\alpha+ \beta)\|w\|^2$ we have: $\frac{\alpha^2}{\alpha - \beta} > \alpha + \beta \Longleftrightarrow \alpha^2 \ge \alpha ^2 - \beta ^2$.

            \item If $\alpha > \beta$
            for the value $4\beta \|w\|^2$ we have: $\frac{\alpha^2}{\alpha - \beta} \ge 4\beta \Longleftrightarrow (\alpha  - 2\beta)^2 \ge 0.$
        \end{itemize}

        Finally, conditions $0 \le \frac{\alpha}{\alpha - \beta} \le 2$ together with $\alpha  \ge \beta$ boil down to $\alpha \ge \max(0, 2\beta)$.
       
    \end{proof}

    \subsection{Proof of Lemma~\ref{discretization_lemma}}

    \label{Appendix_discretization_lemma}
    
    \begin{proof} 
        It is obvious that
        $$\sup_{\F_1, \F_2 \in \mathfrak{A}}  \E f(X_{1}, X_{2})   \ge \sup_{\F_1, \F_2 \in \mathfrak B} \E f(X_{1}, X_{2}),$$
        so we should only prove the converse inequality.
    
        Denote $$A = \sup_{\F_1, \F_2 \in \mathfrak A}   \E f(X_{1}, X_{2}) .$$ Fix $\ve > 0$, let $\delta = \frac{\ve}{\sqrt 2\lambda}$, and $\F_1, \F_2 \in \mathfrak A$, such that~$ \E f(X_{1}, X_{2}) \ge A - \ve$.
    
        Consider a sigma algebra $\hat \F_1$, generated by events $$\Bigg\{X_{1}\in \Big[k \delta , (k+1)  \delta \Big) , k \in \Big\{0, 1, \dots, \left\lceil \frac1{\delta} \right\rceil +1 \Big\}\Bigg\}.$$ 
        
        Analogously construct $\hat {\F_2}$. Note that $\hat \F_i \in \mathfrak B$.
    
        If  $X_{i} \in   \Big[k \delta , (k+1)  \delta \Big)$, then  $X_{\hat \F_i} \in  \Big[k \delta , (k+1)  \delta \Big)$, so $|X_{i} -   X_{\hat \F_i}| \le \delta$. The function $f$ is Lipschitz continuity function, so there exists constant $\lambda$, such that ${\Big|f(X_{1}, X_{2}) - f(X_{\hat \F_1}, X_{\hat \F_2}) \Big| \le \sqrt{2}\lambda \delta}$. Thus we obtain 
        \begin{equation*}
            \Big|\E f(X_{\hat \F_1}, X_{\hat \F_2}) - \E f(X_{1}, X_{2})\Big| \le \E \Big| f(X_{\hat \F_1}, X_{\hat \F_2}) - \E f(X_{1}, X_{2})\Big| \le \E \sqrt{2}\lambda \delta = \sqrt{2}\lambda \delta = \ve.
        \end{equation*}
        This means that  $ f(X_{\hat \F_1}, X_{\hat \F_2}) \ge A - 2\ve$. Then we tend $\ve \to 0$ and achieve
        $$\sup_{\F_1, \F_2 \in \mathfrak{A}}  \E f(X_{1}, X_{2})   \le \sup_{\F_1, \F_2 \in \mathfrak B} \E f(X_{1}, X_{2}).$$
    \end{proof}

    \subsection{Proof of Lemma~\ref{two_problem_lemma}}
    \label{Appendix_two_problems}
    \begin{proof}

    Let $V_1$ be the value of Problem~\ref{discretr_problem_two} and $V_2$ be the value of Problem~\ref{reformulated_dual_LP}.

    Consider  $g(x), h(x)$, such that  
        \begin{equation*}
             p\cdot \sup_{x, y \in [0, 1]} \Big\{f(x, y) - g(x)(1-x) - h(y)(1-y)\Big\}  + (1-p)\cdot \sup_{x, y \in [0, 1]} \Big\{f(x, y) + g(x)x + h(y)y\Big\} \le V_2 + \ve.
        \end{equation*} 
    \end{proof}

    Consider any sequence $(a_i)_{i=1}^n$ where $a_i \in [0, 1]$. Note that $\{a_1, a_2, \dots a_n\} \in [0, 1]$, so we can narrow the supremum to the set $\{a_1, a_2, \dots a_n\}$.
    \begin{equation*}
    \begin{split} 
    &p\cdot \sup_{x, y \in [0, 1]} \Big\{f(x, y) - g(x)(1-x) - h(y)(1-y)\Big\}  + (1-p)\cdot \sup_{x, y \in [0, 1]} \Big\{f(x, y) + g(x)x + h(y)y\Big\} \ge \\  &p\cdot \sup_{1 \le i, j \le n} \Big\{f(a_i, a_j) - g(a_i)(1-a_i) - h(a_j)(1-a_j)\Big\}  + (1-p)\cdot \sup_{1 \le i, j \le n} \Big\{f(a_i, a_j) + g(a_i)a_i + h(a_j)a_j\Big\}.
    \end{split}
     \end{equation*}

   Thus, $$V_2 + \ve \ge p\cdot \sup_{1 \le i, j \le n} \Big\{f(a_i, a_j) - g(a_i)(1-a_i) - h(a_j)(1-a_j)\Big\}  + (1-p)\cdot \sup_{1 \le i, j \le n} \Big\{f(a_i, a_j) + g(a_i)a_i + h(a_j)a_j\Big\},$$
   and we obtain that $V_2 + \ve \ge V_1$, so $V_2 \ge V_1$. Now we aim to prove the opposite inequality.

    The function $f$ is continuous on the square $[0, 1]^2$, so it is uniformly continuous. Fix $\ve > 0$, then there exists $\delta >0 $, such that $$\forall x_1, x_2, y_1, y_2 \in [0, 1]\ |x_1-x_2| + |y_1-y_2|< \delta \Rightarrow |f(x_1) - f(x_2)| < \ve.$$

   Fix a sequence $(a_i)_{i=1}^n$. Without loss of generality we may assume that there is a member of the sequence $(a_i)_{i=1}^n$ in every interval~${\left(x-\frac \delta4, x+\frac \delta4\right) \subset [0, 1]}$ and that $0 = a_1 < a_2 < \dots < a_n = 1$. 
    
    Consider the function $g(x)$ which equals $\mu_i$ at the interval $[a_i, a_{i+1})$ and equals $\mu_n$ at the point $1$. Analogously, construct the function $h(x)$ by sequence $\nu_i$. Note that if $a_i \le x < a_{i+1}$ then $|x-a_i| < |a_i-a_{i+1}| \le \frac \delta 2$. So, if  $a_i \le x < a_{i+1}$ and $a_j \le y < a_{j+1}$, then we have

    \begin{equation*}
    \begin{split}
   &V_2 \le p  \Big(f(x, y)-g(x)(1-x) - h(y)(1-y)\Big) + (1-p)\Big(f(x, y)+g(x)x + g(y)y\Big) \le \\
   &p\Big(f(a_i, a_j) + \ve -\mu_i(1-a_i) - \nu_j(1-a_j)\Big) + (1-p)\Big(f(a_i, a_j) + \ve +\mu_ia_i + \nu_ja_j\Big).
   \end{split}
   \end{equation*}

   So $V_2 \le V_1 + \ve$, and consequently $V_2 \le V_1$. Thus $V_2 = V_1$.

    \subsection{Proof of Lemma~\ref{dual_LP_first_simplification}}

    \label{Appendix_dual_LP_first_simplification}
    
    \begin{proof}
        Let 
        $$A = \sup_{x, y \in [0, 1]} \{f(x, y)-g(x)(1-x)-h(y)(1-y)\}.$$
        
        Consider a function $${t(x, y) = \frac{g(x, y) + h(x, y)}{2}}.$$

        For any $x, y \in [0, 1]$, we have 
        
        \begin{equation}
            \label{temp3}
        f(x, y)-g(x)(1-x)-h(y)(1-y) \le A,
        \end{equation}
        
        and 
        \begin{equation}
            \label{temp4}
        f(y, x)-g(y)(1-y)-h(x)(1-x) \le A.
        \end{equation}

        Summing up~\eqref{temp3}, \eqref{temp4} and dividing by two we have
        
         $$f(x, y) - t(x)(1-x) - t(y)(1-y) \le A.$$

        Thus, $$\sup_{x, y \in [0, 1]} \big\{f(x, y)-t(x)(1-x)-t(y)(1-y)\big\} \le A = \sup_{x, y \in [0, 1]} \big\{f(x, y)-g(x)(1-x)-h(y)(1-y)\big\}.$$ Analogously, $$\sup_{x, y \in [0, 1]} \{f(x, y) + t(x)x+t(y)y\} \le \sup_{x, y \in [0, 1]} \big\{f(x, y) + g(x)x+h(y)y\big\}.$$  
    \end{proof}

    \subsection{Proof of Lemma~\ref{dual_LP_second_simplification}}

    \label{Appendix_dual_LP_second_simplification}

    \begin{proof}
        Lemma ~\ref{dual_LP_first_simplification} states that the infimum over all functions $g, h: [0, 1] \to \R$ of 

        \begin{equation*}
             \frac 12 \cdot \sup_{x, y \in [0, 1]} \Big\{f(x, y)-g(x)(1-x)-h(y)(1-y) \Big\}  + \frac 12 \cdot \sup_{x, y \in [0, 1]} \Big\{f(x, y) + g(x)x+h(y)y \Big\}
        \end{equation*}

        equals the infimum over only function $g, h$ such that $g = h$. Let this infimum equals $A$.

        Note that the sum of two supremums is not less then the supremum of the sum, so 
        
        \begin{gather*}
             \frac 12 \cdot \sup_{x, y \in [0, 1]} \Big\{f(x, y)+h(x)x+h(y)y\Big\} +\frac 12 \cdot \sup_{x, y \in [0, 1]} \{f(x, y)-h(x)(1 - x)-h(y)(1 -y)\} \le \\ \sup_{x, y \in [0, 1]}\Big\{\frac 12 \cdot \Big(f(x, y) +  h(x)x + h(y)y\Big) + \frac 12  \cdot \Big(f(1 - x, 1 - y) -h(1 - x)x - h(1 - y)y\Big)\Big\}.
        \end{gather*}

        If we denote $s(x) = \frac 12 h(x) - \frac 12 h(1-x)$, then $s(x) + s(1-x) = 0$ and we have

        \begin{gather*}
             \sup_{x, y \in [0, 1]}\left\{p\Big(f(x, y) +  h(x)x + h(y)y\Big) + (1 - p)\Big(f(1 - x, 1 - y) -h(1 - x)x - h(1 - y)y\Big)\right\} = \\ \sup_{x, y \in [0, 1]} \Big\{f(x, y) +s(x)x + s(y)y \Big\}.
        \end{gather*}

        Thus, 

        $$A \le \inf_{s(x) + s(1-x)=0} \Bigg\{ \sup_{x, y \in [0, 1]} \Big\{f(x, y) +s(x)x + s(y)y \Big\}\Bigg\}.$$

        On the other hand, if function $S$ satisfies the condition $s(x)+s(1-x) = 0$, then

        \begin{gather*}
            A \le \frac 12 \cdot \sup_{x, y \in [0, 1]}  \Big\{f(x, y)-s(x)(1-x)-s(y)(1-y) \Big\}  + \frac 12 \cdot \sup_{x, y \in [0, 1]}  \Big\{f(x, y) + s(x)x+s(y)y \Big\} = \\
            \frac 12 \cdot \sup_{x, y \in [0, 1]}  \Big\{f(x, y)+s(1 - x)(1-x)+s(1 - y)(1-y) \Big\}  + \frac 12 \cdot \sup_{x, y \in [0, 1]}  \Big\{f(x, y) + s(x)x+s(y)y \Big\} = \\
            \sup_{x, y \in [0, 1]}  \Big\{f(x, y) + s(x)x+s(y)y \Big\}.
        \end{gather*}

        Thus,

        $$A = \inf_{s(x) + s(1-x)=0} \Bigg\{ \sup_{x, y \in [0, 1]} \Big\{f(x, y) +s(x)x + s(y)y \Big\}\Bigg\}.$$

    \end{proof}

    \subsection{Proof of lower bound in section~\ref{cov_lower_bound_section}}

        \label{appendix_cov_lower_bound_section}
        \begin{proof}
        
        We have the following designations:
        \begin{equation*}
        \begin{split}
            \delta &= \left(\frac{2p}{p+1} \right)^3 - p^2, \\
            \gamma &= 2\delta + p^2 + 2p - 3(\delta + p^2)^\frac 23,\\
            x_0 &= \sqrt{2(\delta + p^2)},\\
            g(x) &= \frac{\delta + p^2}x - p \text{ for $x \ge x_0$}, \\
            g(x) &= x_0 - p - \frac x2 \text{ for $x \le x_0$}.
        \end{split}
        \end{equation*}
        And we want to proof two following inequalities for $x, y \in [0, 1]$:

        \begin{gather}
             \label{cov_inequation3}\gamma \ge -(x - p)(y- p) - g(x)(1-x) - g(y)(1-y),\\
             \label{cov_inequation4}\delta \ge -(x-p)(y-p) +g(x)x+g(y)y.
        \end{gather}
        Note that function $g(x)$ is differentiable on $[0, 1]$. The function $-(x-p)(y-p)$ is also differentiable, so we have to check the inequalities~\eqref{cov_inequation3}, \eqref{cov_inequation4} only for $x, y \in \{0, 1\}$ and $x, y$ such that derivatives are zeros.

        First, we check \eqref{cov_inequation4}.

        \begin{itemize}
            \item For $x \ge x_0$, we have $\big(-(x-p)(y-p) + g(x)x\big)' = -y$. Then for $y = 0$  the inequality~\eqref{cov_inequation4} turns to 
            $$\delta \ge p(y-p) + \delta + p^2 - py = \delta.$$
            \item For $x \le x_0$, we have $\big(-(x-p)(y-p)+g(x)x\big)' = -(y-p) + (x_0-p) - x$, so $x + y = x_0$. Then the inequality~\eqref{cov_inequation4} turns to
            $$\delta \ge -xy -p^2 + x_0(x + y) -\frac {x^2}2 -\frac{y^2}2 = x_0^2 - \frac{x_0^2}2 -p^2 = \delta.$$
            \item For $x = 0, y = 0$ the inequality~\eqref{cov_inequation4} turns to $\delta \ge -p^2$.
            \item For $x = 0, y = 1$ the inequality~\eqref{cov_inequation4} turns to $\delta \ge p(1-p) + \delta + p^2 -p = \delta$.
            \item For $x = 1, y = 1$ the inequality~\eqref{cov_inequation4} turns to $\delta \ge -(1-p)^2 + 2\delta + 2p^2 -2p = \delta + \left(\frac{2p}{p+1} \right)^3 - 1$.
        \end{itemize}

        Then we check \eqref{cov_inequation3}.

        \begin{itemize}
            \item For $x \ge x_0$ we have $\big(-(x-p)(y-p) - g(x)(1-x)\big)' = \frac{\delta + p^2}{x^2} - y$.
            \item For $x \le x_0$ we have $\big(-(x-p)(y-p) - g(x)(1-x)\big)' = x_0 + \frac 12 - (x + y)$.
        \end{itemize}

        Thus if both partial derivatives are zeros, we have three cases:

        \begin{itemize}
            \item If $x \ge x_0, y \ge x_0$, then $x^2y = xy^2 = \delta + p^2$. So $x = y = \left(\delta + p^2\right)^\frac 13$ and the inequality~\eqref{cov_inequation3} turns to

            \begin{gather*}
                \gamma \ge 2p + p^2 +2\delta - xy - \frac{\delta + p^2}{x} - \frac{\delta + p^2}{y} = \gamma.
            \end{gather*}

            \item If $x \ge x_0, y \le x_0$ then we have $x + y = x_0 + \frac 12, x^2y = \delta + p^2$. Then the inequality~\eqref{cov_inequation3} turns into

            \begin{equation}
                \left(\frac{2p}{p+1}\right)^3 - 3\left(\frac{2p}{p+1}\right)^2 + \sqrt{2\left(\frac{2p}{p+1}\right)^3 }  \ge y\left(\frac y2 - x\right).
            \end{equation}

            If $p \le \frac 13$, then $x_0 \le \frac 12$. So $y \le x_0 \le \frac 12 \le x$. This means that $\frac y2 -x +xy \le 0$, so we can bound $\frac{y^2}2 -xy \le -x^2y =  -\left(\frac{2p}{p+1}\right)^3$. It is only left to note that $\frac{2p}{p+1} \le \frac 12$, so

            \begin{gather}
                \label{temp1}
                  2\left(\frac{2p}{p+1}\right)^3 - 3\left(\frac{2p}{p+1}\right)^2 + \sqrt{2\left(\frac{2p}{p+1}\right)^3 } \ge 2\left(\frac{2p}{p+1}\right)^3 - 3\left(\frac{2p}{p+1}\right)^2 + \frac{2p}{p+1} = \\
                  \frac{2p}{p+1} \left(\frac{2p}{p+1} - 1\right) \left(2 \cdot \frac{2p}{p+1} -1) \right) \ge 0.
            \end{gather}

            \item
            
            If $x, y < x_0$, then $x = y = \frac{x_0+\frac12}{2}$. But $x_0 \le \frac 12$, so $x = y \ge x_0$, which leads us to a contradiction.

            \item For $x = 1, y = 1$ the inequality~\eqref{cov_inequation3} turns to $\gamma \ge -(1-p)^2$, what follows from

            \begin{gather*}
                0\le 2\left(\frac{2p}{p+1}\right)^3 -3\left(\frac{2p}{p+1}\right)^2 + 1  = \left(\frac{2p}{p+1} - 1\right)^2\cdot \left(2\left(\frac{2p}{p+1}\right)+ 1\right).
            \end{gather*}

            \item For $x = 1, y = 0$ the inequality~\eqref{cov_inequation3} turns to $\gamma \ge p(1-p) - (x_0 -p)$. It can be rewritten as

            $$2\left(\frac{2p}{p+1}\right)^3 - 3\left(\frac{2p}{p+1}\right)^2 + \sqrt{2\left(\frac{2p}{p+1}\right)^3 } \ge 0,$$ what we have already proven in~\eqref{temp1}.

            \item For $x = 0, y = 0$ the inequality~\eqref{cov_inequation3} turns to $\gamma \ge -p^2 -2x_0 + 2p$, which follows from the previous case.
        \end{itemize}
           
    \end{proof}

    \subsection{Code for checking upper bound in section~\ref{section_difference_lower_bound}}

    \label{code}

     \begin{lstlisting}[language=Mathematica]
        ClearAll;
    f[x_, y_, alpha_] := Abs[x - y]^alpha; (*define function f*)
    opt[alpha_] := 
      2*((3*alpha + 1 - Sqrt[alpha^2 + 6*alpha + 1])/(2 alpha))^
        alpha*(-(alpha + 1) + Sqrt[alpha^2 + 6*alpha + 1])/(alpha - 1 + 
          Sqrt[alpha^2 + 6*alpha + 
            1]); (*define function opt for \alpha \ge \alpha_0*)
    roots = FindInstance[opt[alpha] - 2^(-alpha) == 0, {alpha}, Reals];
    alpha0 = alpha /. roots[[1]]; (*find alpha0*)
    y0[alpha_] := 
     Max[((1 - opt[alpha])/alpha)^(1/(alpha - 1)), 1/2] (*define y0*)
    s[alpha_, x_] := 
      Piecewise[{{((1 - x)^alpha - opt[alpha])/(1 - x), 
         x <= 1 - y0[alpha]}, {(opt[alpha] - x^alpha)/x, 
         x >= y0[alpha]}, {0, True}}]; (*define function s*)
    ineq[alpha_, x_, y_] := 
      f[x, y, alpha] + s[alpha, x]*x + s[alpha, y]*y - 
       opt[alpha]; (*set the inequation for \alpha \ge \alpha_0*)
    If[NMaximize[{ineq[alpha, x, y], 0 <= x <= 1, 0 <= y <= 1, 
         alpha >= alpha0}, {x, y, alpha}][[1]] <= $MachineEpsilon, 
     Print["Inequalities for \alpha \ge \alpha_0 are satisfied"], 
     Print["Inequalities for \alpha \ge \alpha_0 are not \
    satisfied"]](*check the inequality*)
    opt1[alpha_] = 2^(-alpha);
    ineq1[alpha_, x_, y_] := 
      f[x, y, alpha] + s[alpha, x]*x + s[alpha, y]*y - 
       opt1[alpha]; (*set the inequality for \alpha < \alpha_0*)
    If[NMaximize[{ineq1[alpha, x, y], 0 <= x <= 1, 0 <= y <= 1, 
         0 < alpha < alpha0}, {x, y, alpha}][[1]] <= $MachineEpsilon, 
     Print["Inequalities for \alpha < \alpha_0 are satisfied"], 
     Print["Inequalities for \alpha < \alpha_0 are not \
    satisfied"]](*check the inequality*)

    \end{lstlisting}

    \bibliography{sample}
\end{document}